\documentclass[reqno]{amsart}
\usepackage[left=1in,right=1in,top=1in,bottom=1in]{geometry}
\usepackage{tikz}

\usetikzlibrary{shapes,decorations,calc,arrows}
\usepackage[foot]{amsaddr}
\usepackage{amsthm}
\usepackage{amscd}
\usepackage{amsfonts}
\usepackage{amsmath}
\usepackage{amssymb}
\usepackage{mathrsfs}
\usepackage{multirow}
\usepackage{verbatim}
\usepackage{url}
\usepackage[hidelinks]{hyperref}
\usepackage{graphicx}
\usepackage{cite}
\usepackage{fancyhdr}
\usepackage{yfonts}
\usepackage{setspace}
\usepackage{titlesec}
\usepackage{enumitem}
\usepackage{dsfont}

\titleformat{\section}[hang]
{\normalfont\Large\bfseries}
{\thesection.}{0.5em}{}

\titlespacing*{\section}{0pc}{2pc}{0.25pc}

\titleformat{\subsection}[runin]
{\normalfont\large\bfseries}
{\thesubsection}{0.5em}{}

\titlespacing{\subsection}{0pc}{1.5pc}{0.5pc}

\newcommand{\Aut}{\text{Aut}}

\newcommand{\N}{\mathbb{N}}
\newcommand{\Z}{\mathbb{Z}}

\newcommand{\R}{\mathbb{R}}
\newcommand{\C}{\mathbb{C}}
\renewcommand{\H}{\mathcal{H}}

\newcommand{\K}{\mathcal{K}}

\newcommand{\vphi}{\varphi}

\newcommand{\<}{\left\langle}
\renewcommand{\>}{\right\rangle}

\newcommand{\I}{\rm{I}}

\newcommand{\Ad}[1]{\text{Ad}\left(#1\right)}

\newcommand{\mr}[1]{\mathring{#1}}
\newcommand{\mt}[1]{\overset{\scriptscriptstyle\triangledown}{#1}}

\newcommand{\eig}{\text{eig}}

\newcommand{\Sd}{\operatorname{Sd}}
\renewcommand{\S}{\operatorname{S}}

\newcommand{\gp}{\mathop{\begin{tikzpicture}[baseline]
\node[circle, fill=cyan, draw=black, inner sep=0pt, minimum size=3pt] (mid) at (0, 2.5pt) {};
\foreach \x in {0,60,...,300} {
\node[circle, fill=cyan, draw, inner sep=0pt, minimum size=3pt] (\x) at ($(mid)!6pt!(mid.\x)$) {};
\draw (\x) -- (mid) ;
}
\end{tikzpicture}}}

\newtheorem{thm}{Theorem}[section]
\newtheorem{thmalpha}{Theorem}

\newtheorem{prop}[thm]{Proposition}
\newtheorem{lem}[thm]{Lemma}

\theoremstyle{definition}

\newtheorem{ex}[thm]{Example}

\newtheorem{rem}[thm]{Remark}

\title{\textbf{Irreducibility and weak spectral gap in free product von Neumann algebras}}
\author{Aldo Garcia Guinto$^1$}
\address{$^1$Department of Mathematics, University of Houston\hfill \url{aegarciaguin@central.uh.edu}}
\author{Fehmi Ekin Giritlioglu$^2$}
\address{$^2$Department of Mathematics, Michigan State University\hfill \url{giritlio@msu.edu}}
\author{Rahul Kumar R.$^3$}
\address{$^5$Department of Mathematics, Michigan State University \hfill \url{ramach32@msu.edu}}
\author{Yoonkyeong Lee$^4$}
\address{$^3$Department of Mathematics, Texas A\&M University\hfill \url{yoong@tamu.edu}}
\author{Brent Nelson$^5$}
\address{$^4$Department of Mathematics, Michigan State University \hfill\url{brent@math.msu.edu}}

\date{}

\begin{document}

\begin{abstract}
Consider a free product $(M,\varphi)= (M_1,\varphi_1)* (M_2,\varphi_2)$ of non-trivial von Neumann algebras. Using amalgamated free product techniques, we establish irreducibility and weak spectral gap results for free product subalgebras and the centralizer subalgebra of the free product state. In particular, it is shown that, under a mild dimension constraint, the inclusion of the diffuse summand of these subalgebras into the corresponding corner of $M^\omega$ is irreducible for any free ultrafilter $\omega\in \beta\N\setminus \N$. We also obtain analogous irreducibility results for graph products of von Neumann algebras.
\end{abstract}

\maketitle

\section*{Introduction}

For non-trivial von Neumann algebras $M_1$ and $M_2$ equipped with faithful normal states $\vphi_1$ and $\vphi_2$, respectively, their free product
    \[
        (M,\vphi):= (M_1, \vphi_1)* (M_2,\vphi_2)
    \]
is a fundamental construction that has been studied extensively (see, for example, \cite{Dyk93, Dyk94, Bar95, Dyk97, Hou07, IPP08, Ued11b, Ued13, Ued16, HN21}). The most general theorem is due to Ueda, and it states that when $\dim(M_1)+\dim(M_2)\geq 5$, then the diffuse summand of $M$ is a full factor of type $\rm{II}_1$ or $\rm{III}_\lambda$ with $\lambda\neq 0$ and the atomic summand of $M$ is finite dimensional and computable in terms of the finite dimensional summands (if any) of $M_1$ and $M_2$ (see \cite[Theorem 4.1]{Ued11a}). It is also known, thanks to \cite[Theorem 1.1]{Dyk93}, that this dimension condition is necessary for the diffuse summand of $M$ to be a factor. Moreover, when $M_1$ and $M_2$ are atomic, then the results of \cite{Dyk93, HN21} imply the diffuse summand is isomorphic to either an interpolated free group factor or a free Araki--Woods factor depending on whether or not $\vphi_1$ and $\vphi_2$ are both tracial. Yet even with so many advances, much remains to be understood about \emph{subalgebras} of free product von Neumann algebras, which is the focus of the present article.

Suppose for each $i=1,2$, that $N_i\leq M_i$ is a non-trivial subalgebra admitting a $\vphi_i$-invariant faithful normal conditional expectation $E_i\colon M_i\to N_i$. Then $N:=N_1\vee N_2 \leq M$ is also a free product von Neumann algebra (with respect to the states $\vphi_i|_{N_i}$, $i=1,2$), and so the aforementioned results imply its diffuse summand is a full factor if and only if $\dim(N_1)+\dim(N_2)\geq 5$. But more is true because of the ambient free product structure of $M$:

\begin{thmalpha}[{Theorem~\ref{thm:irreducibility_in_free_products}}]\label{introthm:A}
Consider the free product
    \[
        (M,\vphi):=(M_1,\vphi_1) *(M_2,\vphi_2).
    \]
Let $N:=N_1\vee N_2$ for non-trivial subalgebras $N_i\leq M_i$ admitting $\vphi_i$-preserving faithful normal conditional expectations, 
$i=1,2$, and let $z\in N$ be the central support of the diffuse summand of $N$. Then the following are equivalent:
    \begin{enumerate}[label=(\roman*)]
        \item $\dim(N_1)+\dim(N_2)\geq 5$;
        \item $Nz$ is a factor;
        \item $(Nz)'\cap (z M z)^\omega = \C z$ for every free ultrafilter $\omega\in \beta\N\setminus \N$.
    \end{enumerate}
\end{thmalpha}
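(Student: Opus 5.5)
The implications (iii)$\Rightarrow$(ii)$\Rightarrow$(i) are essentially formal. For (iii)$\Rightarrow$(ii), the constant sequences embed $zMz$ into $(zMz)^\omega$, so $Z(Nz)\subseteq (Nz)'\cap (zMz)^\omega=\C z$. For (ii)$\Rightarrow$(i), note that $N\cong (N_1,\vphi_1|_{N_1})*(N_2,\vphi_2|_{N_2})$, because the conditional expectations $E_i$ guarantee that $N_1,N_2$ remain free with respect to $\vphi|_N$. We then invoke \cite[Theorem 1.1]{Dyk93}: when $\dim(N_1)+\dim(N_2)\le 4$, i.e.\ $N\cong \C^2*\C^2$ with arbitrary weights, the diffuse summand of $N$ is $L^\infty[0,1]\otimes M_2(\C)$, or it is zero. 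A nonzero diffuse summand is therefore not a factor. The degenerate case where it is zero should be handled by the convention that $z=0$ is excluded, or it is vacuous; I will check how $z$ is treated there.

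The substance is (i)$\Rightarrow$(iii). I would reduce to an amalgamated free product situation. Write $M=N*_{?}$ in a convenient form. Since $E_i\colon M_i\to N_i$ is $\vphi_i$-preserving, we have
\[
M=(M_1*N_2)*_{N}(N_1*M_2)\quad\text{or, more simply,}\quad M \supseteq N,
\]
and $M$ decomposes as an $N$-bimodule. The key step is to control $L^2(M)\ominus L^2(N)$ as an $N$-$N$-bimodule. I expect that every reduced word involving some letter from $M_i\ominus N_i$ gives a bimodule that is a direct sum of submodules of coarse-type bimodules relative to $N_1$ or $N_2$. Concretely, $L^2(M)\ominus L^2(N)\cong \bigoplus L^2(N)\otimes_{N_i}\mathcal{K}\otimes_{N_j}L^2(N)$.

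With this decomposition, a central sequence $(x_n)\in (Nz)'\cap(zMz)^\omega$ splits into two parts. The first part lies in $N^\omega$; there I would apply Ueda's fullness result \cite[Theorem 4.1]{Ued11a} and asymptotic-freeness arguments to get $(Nz)'\cap (Nz)^\omega=\C z$. Strictly, fullness gives only trivial central sequences; to get trivial relative commutant inside the ultrapower for states, I would use Ueda's techniques, or Houdayer--Ioana style results showing that $Nz$ has no nontrivial centralizing sequences. The second part is orthogonal to $N$, and I would show it tends to $0$. For this I would use a spectral gap/mixing argument relative to $N_i$: pick unitaries $u\in N_1z$ and $v\in N_2z$ whose words "push" any vector in $L^2(M)\ominus L^2(N)$ far away. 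The model is Popa's $14\varepsilon$ lemma, adapted to free products: the vectors $\xi$ and $u\xi u^*$ have almost orthogonal components after one applies suitable projections onto words starting with letters from $N_1^\circ$ versus $N_2^\circ$. This step needs diffuseness, or at least $\dim\ge 5$, so that such unitaries with $\vphi(u)=0$ exist inside the relevant corners.

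The main obstacle is the non-tracial setting combined with the corner by $z$, together with the finite dimensional atomic parts of $N$. The unitaries must be chosen inside $zNz$ with small expectation, while keeping control of $\vphi$-modular behaviour in the ultrapower. Here I would first try to find a diffuse abelian or Haar-unitary subalgebra in the centralizer of $\vphi|_{zNz}$, using Ueda's structure results. I would then run the orthogonality estimate with respect to that subalgebra, using the Ocneanu ultrapower so that $\vphi^\omega$ commutes appropriately.
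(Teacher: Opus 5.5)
\textbf{Verdict: genuine gap.} The formal implications and the fullness step are fine, but the core step of (i)$\Rightarrow$(iii) has no working mechanism in the hard case.

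\textbf{What is fine.} Your arguments for (iii)$\Rightarrow$(ii)$\Rightarrow$(i) work. There is no degenerate case: two free non-trivial projections always have a non-zero generic part, so the diffuse summand of $\C^2*\C^2$ is a non-zero copy of $L^\infty[0,1]\otimes M_2(\C)$. Disposing of $E_N^\omega[x]$ via fullness of $Nz$ is also fine.

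\textbf{Where it fails.} The gap is in the only substantive step: showing that the part of $x\in (Nz)'\cap(zMz)^\omega$ orthogonal to $N$ vanishes. It sits exactly in the case the paper singles out as hard, when $N_1$ and $N_2$ both have atomic summands (e.g.\ both finite dimensional).
\begin{itemize}
\item The orthogonality argument you sketch (as in Ueda's proof) needs an \emph{infinite} sequence of unitaries $u_n$ in a free component with $E[u_n^*u_m]=0$ for $n\neq m$. Only then are the images of the word subspaces under $u_nJu_nJ$ mutually orthogonal. With only $n$ such unitaries you get merely $\lim_{k\to\omega}\|P(x_k)\|\leq \|x\|/\sqrt{n}$, not $0$.
\item A finite dimensional $N_i$ contains at most $\dim(N_i)$ such unitaries, since they are orthonormal in $L^2(N_i)$. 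So $\dim(N_1)+\dim(N_2)\geq 5$ gives you nothing here. Moreover $N_1z\not\subset M_1$ once $z\neq 1$.
\item Your fallback, a Haar unitary or diffuse abelian subalgebra of $(zNz)^{\vphi}$, does not help. Its elements are limits of combinations of alternating words in $N_1,N_2$, so conjugating by them has no controlled effect on the word decomposition of $L^2(M)\ominus L^2(N)$.
\item Such a subalgebra on its own also has a huge relative commutant in $(zMz)^\omega$: bounded sequences from it already commute with it. Everything would therefore hinge on its position, which you do not control.
\end{itemize}
As the introduction notes, even knowing that $Nz$ is an interpolated free group factor or free Araki--Woods factor is not enough; you must use how $Nz$ sits in $zMz$.

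\textbf{How the paper closes the gap.} It manufactures diffuseness inside a free component by changing the amalgam.
\begin{itemize}
\item If some $N_i$ is diffuse, Theorem~\ref{thm:weak_spectral_amalgamated} with $B=\C$, $p=1$ settles it.
\item Otherwise assume $\dim(N_1)\geq 3$ and take a non-trivial projection $p\in N_1^{\vphi_1}$, which exists by Lemma~\ref{lem:ergodic_states}. With $A=\C p+\C(1-p)$, Lemma~\ref{lem:amalgamated_osmosis} gives $M=M_1\underset{A}{*}(M_2\vee A)$ and $N=N_1\underset{A}{*}(N_2\vee A)$.
\item Since $N_2^{\vphi_2}\vee A\cong N_2^{\vphi_2}*A$, Dykema's theorem gives a diffuse summand with central support $q$ satisfying $pq\neq0\neq(1-p)q$.
\item Set $e=pq$, choose a projection $f<(1-p)q$, and amalgamate again over $B=\C e+\C(p-e)+\C f+\C(1-p-f)\subset N^{\vphi}$. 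Then $e$ is minimal and central in $B$, $e(N_2\vee A)e$ is diffuse, and $e(N_1\vee B)e\neq\C e$, shown with explicit elements using $\dim(N_1)\geq 3$.
\item The amalgamated Ueda estimate (Proposition~\ref{prop:amalgmated_Ueda}, Theorem~\ref{thm:weak_spectral_amalgamated}) needs diffuseness only in the corner $e$, not relative to $B$. It yields $(eNe)'\cap(eMe)^\omega=\C e$.
\item Lemma~\ref{lem:irred_corners}, together with Ueda's factoriality of $Nz$ and finite dimensionality of $N(1-z)$, transfers this from $e$ to $z$.
\end{itemize}

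\textbf{If you keep your bimodule picture.} For atomic $N_i$ the summands $L^2(N)\otimes_{N_i}\K\otimes_{N_j}L^2(N)$ embed, at least in the tracial case, in multiples of the coarse bimodule. The input needed there would then be a spectral gap (non-amenability) property of $Nz$, not mixing by unitaries of $N_i$. That is a different argument from the one you sketch. Carrying it out for the Ocneanu ultrapower with non-tracial $\vphi$, and in the mixed diffuse/atomic case, would still be substantial work.
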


\noindent In other words, not only is $Nz$ a full factor, but the inclusion $Nz \leq zM z$ is irreducible and has weak spectral gap. The above theorem can be partially deduced from the proof of \cite[Theorem 4.1]{Ued11a}, which is broken into cases according to whether or not $M_1$ and $M_2$ admit diffuse summands. However, it is not at all clear how to deduce Theorem~\ref{introthm:A} in the most challenging case: when $N_1$ and $N_2$ are both atomic. While the isomorphism class of $Nz$ is known in this case by \cite{Dyk93, HN21}, even this will not avail you as there is nothing to preclude an interpolated free group factor or a free Araki--Woods factor from embedding into $z Mz$ with non-trivial relative commutant. It may be possible to obtain Theorem~\ref{introthm:A} by sufficiently honing the methods in this final case, but we take a different approach. The key idea is to identify the free products with suitable amalgamated free products that admit sufficient diffuseness in one of the components. While our argument does ultimately rely on \cite[Theorem 4.1]{Ued11a} (at least insofar as asserting that $Nz$ is a factor), the amount of casework is sharply reduced through the use of amalgamated free products. We also remark that the technical results we obtain in the pursuit of Theorem~\ref{introthm:A} (see Proposition~\ref{prop:amalgmated_Ueda} and Theorem~\ref{thm:weak_spectral_amalgamated}) are similar to those obtained by Ueda in \cite{Ued13}, where some aspects of \cite{Ued11a} have been generalized to the setting of amalgamated free products. But our results are novel because we do not demand diffuseness relative to the amalgam, and this is crucial in the proof of Theorem~\ref{introthm:A}.

These same techniques can be applied to study the free product state $\vphi$ and its centralizer subalgebra $M^\vphi =\{ x\in M\colon \vphi(xy) = \vphi(yx)\ \forall y\in M\}$; in fact, the primary motivation for this article was to understand when the diffuse summand of $M^\vphi$ is a factor (or even full). It turns out that the notion of \emph{almost periodicity} for states, which was introduced by Connes in \cite{Con72} (see also \cite{Con74}), is essential to answering these questions. The \emph{almost periodic part} of $(M,\vphi)$ is the von Neumann subalgebra
    \[
        M^{(\vphi,\text{ap})}:=\{x\in M\colon \exists \lambda>0\ \varphi(xy) = \lambda \varphi(yx)\ \forall y\in M\}'',
    \]
which admits a $\vphi$-preserving conditional expectation (see \cite[Proposition 4.1]{GGLN25}), and one says $\vphi$ is \emph{almost periodic} if $M^{(\vphi, \text{ap})} = M$. Consider once more a free product $(M,\vphi) = (M_1,\vphi_1)*(M_2,\vphi_2)$ of non-trivial von Neumann algebras. Then $\vphi$ is almost periodic whenever both $\vphi_1$ and $\vphi_2$ are almost periodic, and in this case $M^\vphi$ and $M$ have the same central support $z$ for their diffuse summands (see Remark~\ref{rem:almost_periodic_diffuse_and_atomic} below). Moreover, another result of Ueda states that the same dimension condition $\dim(M_1)+\dim(M_2)\geq 5$ implies $(M^\vphi z)' \cap (Mz)^\omega = \C z$ (see \cite[Theorem 2.1]{Ued11b}). However, this can fail outside of the almost periodic case; for example, if $\vphi_1$ is ergodic in the sense that $M_1^{\vphi_1}=\C$ then $M^\vphi = M_2^{\vphi_2}$, and furthermore $M^\vphi=\C$ when both $\vphi_1$ and $\vphi_2$ are ergodic. Note that almost periodic states satisfy $(M^\vphi)'\cap M = (M^\vphi)' \cap M^\vphi$ (see \cite[Theorem 10]{Con72}), and so ergodicity and almost periodicity occur simultaneously only when $M=\C$. In fact, ergodicity is in some sense a negation of almost periodicity because it further implies the almost periodic part is trivial (see Example~\ref{ex:atomic_abelian_centralizer} below). Thus, any generalization of Ueda's result must necessarily avoid ergodicity of $\vphi_1$ and $\vphi_2$, and this is equivalent to asserting that the almost periodic parts $M_1^{(\vphi_1,\text{ap})}$ and $M_2^{(\vphi_2,\text{ap})}$ are non-trivial. Fortunately, little else is required:

\begin{thmalpha}[{Theorem~\ref{thm:full_centralizer}}]\label{introthm:B}
Consider the free product
    \[
        (M,\vphi):=(M_1,\vphi_1) *(M_2,\vphi_2),
    \]
where $\vphi_1$ and $\vphi_2$ are non-ergodic states. Let $z\in M^\vphi$ be the central support of the diffuse summand of $M^\vphi$. Then the following are equivalent:
    \begin{enumerate}[label=(\roman*)]
        \item $\dim(M_1^{(\vphi_1,\text{ap})})+\dim(M_2^{(\vphi_2,\text{ap})}) \geq 5$;
        \item $M^\vphi z$ is a factor;
        \item $(M^\vphi z)'\cap (z M z)^\omega= \C z$ for every free ultrafilter $\omega\in \beta\N\setminus \N$.
    \end{enumerate}
\end{thmalpha}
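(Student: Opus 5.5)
Write $A_i := M_i^{(\vphi_i,\text{ap})}$ and $A := A_1\vee A_2 \leq M$. The plan is to reduce everything to the almost periodic subalgebra $A$ and then combine Theorem~\ref{introthm:A} with Ueda's result \cite[Theorem 2.1]{Ued11b}.

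\emph{Step 1: the centralizer lives inside $A$.} Each $A_i$ admits a $\vphi_i$-preserving faithful normal conditional expectation by \cite[Proposition 4.1]{GGLN25}. Hence $(A,\vphi|_A) = (A_1,\vphi_1|_{A_1})*(A_2,\vphi_2|_{A_2})$, and this free product state is almost periodic. Since $\vphi_i$ is non-ergodic, $A_i\neq\C$ (compare Example~\ref{ex:atomic_abelian_centralizer}), so the summands are non-trivial. I then show $M^\vphi = A^{\vphi|_A}$. The inclusion $\supseteq$ is clear because $A$ carries a $\vphi$-preserving expectation. For $\subseteq$, the centralizer is the fixed point algebra of the modular group. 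I would expand $x\in M^\vphi$ in reduced words with letters from $M_i\ominus\C$, and use that $\sigma^\vphi$ acts letterwise as $\sigma^{\vphi_i}$. Fixed points then lie in the closed span of products of eigenvectors whose eigenvalues multiply to $1$, and eigenvectors of $\Delta_{\vphi_i}$ belong to $A_i$. Alternatively, one can cite that the almost periodic part of the free product contains the centralizer. Consequently $M^\vphi = A^{\vphi}$, and $z$ is the central support of the diffuse summand of $A^\vphi$. By Remark~\ref{rem:almost_periodic_diffuse_and_atomic}, $z$ is also the central support of the diffuse summand of $A$.

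\emph{Step 2: the implications.} (iii)$\Rightarrow$(ii) is immediate, since $Z(M^\vphi z)\subseteq (M^\vphi z)'\cap zMz$. For (ii)$\Rightarrow$(i), suppose $\dim A_1+\dim A_2\leq 4$. Then both $A_i$ are $\C^2$, or one is $\C$, which is excluded by non-ergodicity. Dykema's computation \cite[Theorem 1.1]{Dyk93} of $\C^2*\C^2$ shows the diffuse part of $A$ is not a factor: it is $L^\infty[0,1]\otimes M_2$ over a nontrivial center. Since $\vphi$ is almost periodic on $A$, the center of $Az$ lies in $A^\vphi z$ (\cite[Theorem 10]{Con72}), and it is non-trivial there. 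So $M^\vphi z = A^\vphi z$ is not a factor.

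For (i)$\Rightarrow$(iii), the key step, apply Ueda \cite[Theorem 2.1]{Ued11b} to the almost periodic free product $A$. This gives $(A^\vphi z)'\cap (Az)^\omega = \C z$. Applying Theorem~\ref{introthm:A} with $N_i = A_i$ gives $(Az)'\cap (zMz)^\omega = \C z$. Now take $x\in (M^\vphi z)'\cap (zMz)^\omega$. The main obstacle is passing from $zMz$ down to $Az$. I would use the $\vphi$-preserving expectation $E\colon M\to A$, which lifts to $E^\omega\colon M^\omega\to A^\omega$ and is $A^\vphi$-bimodular, so $E^\omega(x)\in (A^\vphi z)'\cap (Az)^\omega = \C z$. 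It remains to show $x - E^\omega(x) = 0$. This requires a spectral gap estimate for $A^\vphi$ acting on $L^2(M)\ominus L^2(A)$. I would try to prove it via Theorem~\ref{thm:weak_spectral_amalgamated}, realizing $M$ as an amalgamated free product over $A$-type pieces. An alternative route is to use Theorem~\ref{introthm:A} directly: an element commuting with $A^\vphi z$ and in the orthocomplement of $A^\omega$ can be shown asymptotically to commute with $Az$, since $Az$ is generated by eigenoperators $a$ with $a\,b = \lambda\, b\,a$-type relations relative to $\vphi$. I expect this reduction to be the most delicate part of the argument.
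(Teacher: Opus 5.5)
Step 1 and the implications (iii)$\Rightarrow$(ii) and (ii)$\Rightarrow$(i) are fine. In the latter, $A_i\cong\C^2$ is abelian, so $\vphi|_A$ is tracial and $M^\vphi=A=\C^2*\C^2$ directly.

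\textbf{The gap is in (i)$\Rightarrow$(iii).} You obtain $(A^\vphi z)'\cap(Az)^\omega=\C z$ from Ueda and $(Az)'\cap(zMz)^\omega=\C z$ from Theorem~\ref{thm:irreducibility_in_free_products} with $N_i=A_i$. These two facts do not combine to give $(A^\vphi z)'\cap(zMz)^\omega=\C z$, because irreducibility is not transitive along $A^\vphi z\leq Az\leq zMz$. After applying $E^\omega$ you still need $(A^\vphi z)'\cap(zMz)^\omega\subset A^\omega$, which carries essentially the whole difficulty, and you leave it open. Your suggested alternative route also fails. Eigenoperators satisfy the KMS relation $\vphi(ay)=\lambda\vphi(ya)$, not algebraic relations of the form $ab=\lambda ba$, and commuting with the centralizer gives no control over commutators with eigenoperators. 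For example, in $M_2(\C)$ with a non-tracial faithful state, the centralizer and its relative commutant are both the diagonal, yet $e_{11}$ does not commute with the eigenoperator $e_{12}$.

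\textbf{The missing idea is to use monotonicity inside the centralizer instead of passing through $A$.} If $Q\leq M^\vphi$ and $e\in Q$, then $(eM^\vphi e)'\cap(eMe)^\omega\subset(eQe)'\cap(eMe)^\omega$. By Lemma~\ref{lem:irred_corners}, together with the factoriality of $M^\vphi z$ (from Proposition~\ref{prop:almost_periodic_part_free_product}, Remark~\ref{rem:almost_periodic_diffuse_and_atomic} and Ueda), it suffices to treat one nonzero corner $e\leq z$.

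\emph{Easy case.} If $\dim M_1^{\vphi_1}+\dim M_2^{\vphi_2}\geq5$, take $Q=M_1^{\vphi_1}*M_2^{\vphi_2}$ and apply Theorem~\ref{thm:irreducibility_in_free_products} with $N_i=M_i^{\vphi_i}$, not $A_i$.

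\emph{Hard case.} Suppose $M_1^{\vphi_1}=\C p+\C(1-p)$, $M_2^{\vphi_2}\cong\C^2$ and $\dim A_1\geq3$, so that $Q=\C^2*\C^2$ is not a factor. Here the paper reruns the amalgamation construction from the proof of Theorem~\ref{thm:irreducibility_in_free_products}, with $B=\C e+\C(p-e)+\C f+\C(1-p-f)\subset M^\vphi$. It then applies Theorem~\ref{thm:weak_spectral_amalgamated} to $Q_1=(A_1\vee B)^\vphi$ and $Q_2=M_1^{\vphi_1}\vee M_2^{\vphi_2}$. The required non-triviality of $eQ_1e$ comes from an eigen partial isometry $v\in M_1^{(\vphi_1,\vphi(1-p)/\vphi(p))}$ with $v^*v=p$ and $vv^*=1-p$ (Example~\ref{ex:atomic_abelian_centralizer}). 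This $v$ makes $ev^*(f-E[f])ve$ a non-scalar element of the centralizer corner. That is exactly where the almost periodic part, rather than just the centralizer, enters the proof, and your proposal has no substitute for it.

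You were right that Theorem~\ref{thm:weak_spectral_amalgamated} is the relevant tool. It must be aimed at a subalgebra of $M^\vphi$, which makes the detour through $A$ and $E^\omega$ unnecessary.
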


\noindent Since $M^{(\vphi,\text{ap})}$ is the free product of the almost periodic parts $M_1^{(\vphi_1,\text{ap})}$ and $M_2^{(\vphi_2,\text{ap})}$ (see Proposition~\ref{prop:almost_periodic_part_free_product} below), it follows that $z$ is also the central support of the diffuse summand of $M^{(\vphi,\text{ap})}$. Consequently, \cite[Theorem 4.1]{Ued11a} allows one to compute the finite dimensional corner $M^{(\vphi,\text{ap})}(1-z)$ and the restriction of $\vphi$, from which the subalgebra $M^\vphi(1-z)$ is readily determined. In particular, if the non-trivial, minimal, central projections in $M_1^{\vphi_1}$ and $M_2^{\vphi_2}$ all have mass at most $\frac12$, then $z=1$ (see Remark~\ref{rem:diffuse_free_product_centralizer}). In this case, $M^\vphi$ is diffuse, and if one further assumes $\dim(M_1^{(\vphi_1,\text{ap})})+\dim(M_2^{(\vphi_2,\text{ap})}) \geq 5$, then Theorem~\ref{introthm:B} implies it is a full type $\mathrm{II}_1$ factor whose inclusion into $M$ is irreducible and has weak spectral gap.

Both Theorems~\ref{introthm:A} and \ref{introthm:B} can be applied to give irreducibility results in the context of graph product von Neumann algebras (see Theorems~\ref{thm:irreducible_graph_products} and \ref{thm:extremal_graph_products}). We also establish instances in which the factoriality of the centralizer of a graph product of almost periodic states can be fully characterized (see Theorems~\ref{thm:extremal_atomic_graph_products} and \ref{thm:extremal_almost_periodic_graph_products}). In many of these situations, it is therefore possible to deduce the resulting (sub)type of the graph product von Neumann algebra (see Proposition~\ref{prop:join-irreducible_extremal} and Remarks~\ref{rem:graph_product_type} and \ref{rem:almost_periodic_graph_product_type}).

\section*{Acknowledgments}
We thank Michael Hartglass, Ben Hayes, David Jekel, and Srivatsav Kunnawalkam Elayavalli for helpful discussions related to this work. No large language models were utilized at any point in the development of this article. All authors were supported by NSF grant DMS-2247047.

\section{Preliminaries}

Given a von Neumann algebra $M$, we will make use of lattice notation for the collection of (unital) von Neumann subalgebras: we write $N\leq M$ to denote that $N$ is a von Neumann subalgebra of $M$ and we write $N_1 \vee N_2$ for the von Neumann algebra generated by $N_1, N_2\leq M$. A von Neumann algebra $M$ is called \textit{diffuse} if it admits no minimal projections, and $M$ is called \textit{atomic} if it is a direct sum of type $\mathrm{I}$ factors. There always exists a unique central projection $z\in M$ so that $Mz$ is diffuse and $M(1-z)$ is atomic. Indeed, $1-z$ is merely the sum of any atoms in the center of the type $\mathrm{I}$ summand of $M$.

We will assume some familiarity with amalgamated free products and graph products of von Neumann algebras. For the background on amalgamated free products, we direct the reader to \cite[Section 2]{Ueda99}. For background on graph products, we direct the reader to \cite[Section 1.1]{CdSHJKEN25} and \cite[Section 3]{CF17}.

For a faithful normal state $\varphi$ on $M$, we refer to the pair $(M,\vphi)$ as a \emph{statial} von Neumann algebra. We write $L^2(M,\vphi)$ for the GNS Hilbert space associated to this pair and we will implicitly identify $M$ with its representation on this Hilbert space. The map $x\mapsto x^*$ is a densely defined and closable operator on $L^2(M,\vphi)$, whose closure we denote by $S_\vphi$. Letting $S_\vphi = J_\vphi \Delta_{\vphi}^{1/2}$ be the polar decomposition, then $J_\vphi$ is a conjugate linear, involutive, isometry called the \emph{modular conjugation} of $\vphi$, and $\Delta_\vphi$ is a non-singular, positive, self-adjoint operator called the \emph{modular operator} of $\vphi$. These operators satisfy
    \[
        J_\vphi M J_\vphi = M' \cap B(L^2(M,\vphi)) \qquad \text{ and } \qquad \Delta_\vphi^{it} M \Delta_\vphi^{-it} = M \qquad t\in \R.
    \]
Consequently, $\sigma_t^\vphi:= \Ad{\Delta_\vphi^{it}}\in \Aut(M)$ for all $t\in \R$, and the strongly continuous action $\sigma^\vphi\colon \R\curvearrowright M$ is called the \emph{modular automorphism group} of $\vphi$. The fixed point subalgebra
    \[
        M^\vphi:= \{x\in M\colon \sigma_t^{\vphi}(x) = x\ \forall t\in \R\}
    \]
is called the \emph{centralizer} of $\vphi$, and we say $\vphi$ is \emph{ergodic} if $M^\vphi = \C$ (in reference to the ergodicity of $\sigma^\vphi$). We say a non-zero element $x\in M$ is an eigenoperator of $\sigma^\vphi$ if there exists $\lambda >0$ so that $\sigma_t^\vphi(x)=\lambda^{it} x$ for all $t\in \R$. The set of eigenoperators with eigenvalue $\lambda$ is denoted $M^{(\vphi,\lambda)}$ (so that $M^{(\vphi,1)}= M^\vphi\setminus \{0\}$),
and thanks to the KMS condition satisfied by $\sigma^\vphi$ (see \cite[Section VIII.1]{Tak03}) one can also characterize this space as follows:
    \[
        M^{(\vphi,\lambda)}=\{x\in M\setminus\{0\}\colon \vphi(xy) = \lambda \vphi(yx)\ \forall y \in M\}.
    \]
We denote $\Sd(\vphi):=\{\lambda>0\colon M^{(\vphi,\lambda)}\neq\emptyset\}$, which is equivalently the point spectrum of $\Delta_\vphi$. We also define
    \[
        M^{(\vphi,\eig)}:= \text{span}\left(\bigcup_{\lambda \in \Sd(\vphi)} M^{(\vphi,\lambda)}\right),
    \]
which one can see is a unital $*$-subalgebra by verifying that $M^{(\vphi,\lambda_1)}M^{(\vphi,\lambda_2)} = M^{(\vphi, \lambda_1\lambda_2)}$ and $(M^{(\vphi,\lambda)})^* = M^{(\vphi, 1/\lambda)}$. One says $\vphi$ is \emph{almost periodic} if $(M^{(\vphi,\eig)})'' =M$. This is equivalent to $\Delta_\vphi$ being diagonalizable (see \cite[Lemma 1.4]{GGLN25}), and is further equivalent to Connes' original definition that for each pair $x,y\in M$ the function
    \[
        \R\ni t\mapsto \vphi(\sigma_t^{\vphi}(x)y)
    \]
is almost periodic in the sense that it is a uniform limit of linear combinations of periodic functions (see \cite[Lemma 7]{Con72}). In general, one can consider the \emph{almost periodic part} of $(M,\vphi)$, which is given by
    \[
        M^{(\vphi,\text{ap})}:= (M^{(\vphi,\eig)})''.
    \]
This subalgebra is always the range of a $\vphi$-preserving faithful normal conditional expectation,  and the restriction $\vphi|_{M^{(\vphi,\text{ap})}}$ is always an almost periodic state (see \cite[Proposition 4.1]{GGLN25}).

\begin{rem}\label{rem:almost_periodic_diffuse_and_atomic}
Let $(M,\vphi)$ be a von Neumann algebra equipped with an almost periodic state, and let $z\in M$ be the central projection such that $Mz$ is diffuse and $M(1-z)$ is atomic. Then $M^{\vphi}z$ is diffuse by \cite[Lemma 2.1]{Ued11b}, and we further claim that $M^{\vphi}(1-z)$ is atomic. Indeed, $M(1-z)$ is a countable direct sum of type $\I$ factors, and consequently the restriction of $\vphi$ to this corner is the sum of functionals given by taking traces against positive trace-class operators. Thus $M^{\vphi}(1-z)$ corresponds to the direct sum of commutants of these positive trace-class operators. Since positive trace-class operators are, in particular, compact, they have finite dimensional eigenspaces and therefore their commutant is a direct sum of matrix algebras. Hence $M^{\vphi}(1-z)$ is atomic.$\hfill\blacksquare$
\end{rem}

We now collect a number of results that are quite likely to be well-known to experts but will be convenient to reference in later sections of the article.

\begin{lem}\label{lem:ergodic_states}
If $\vphi$ is an ergodic state on $M$, then $M$ is either trivial or a type $\mathrm{III}$ factor
\end{lem}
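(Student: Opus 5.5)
We show three things in turn: the center of $M$ is trivial, $M$ has no semifinite summand, and $M$ is type $\mathrm{III}$ unless $M=\C$. All three follow from the inclusions $\mathcal{Z}(M)\subseteq M^\vphi$ and $M^\vphi=\C$.

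\emph{Factoriality.} Since $\sigma_t^\vphi$ is implemented by $\Delta_\vphi^{it}$ and acts trivially on the center, we have $\mathcal{Z}(M)\subseteq M^\vphi=\C$. This holds because for central $z$ the KMS characterization gives $\vphi(zy)=\vphi(yz)$ for all $y$, so $z\in M^{(\vphi,1)}\cup\{0\}$. Hence $M$ is a factor.

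\emph{Exclusion of the semifinite case.} Suppose $M$ is semifinite with faithful normal semifinite trace $\tau$. Then $\vphi=\tau(h\,\cdot)$ for a positive nonsingular operator $h$ affiliated with $M$. Moreover $\sigma_t^\vphi=\Ad{h^{it}}$, so every spectral projection of $h$ lies in $M^\vphi$. By ergodicity each spectral projection of $h$ is $0$ or $1$, so $h=c1$ for some scalar $c>0$. Then $\tau=c^{-1}\vphi$ is a finite trace and $\sigma^\vphi$ is trivial, so $M=M^\vphi=\C$. Thus either $M=\C$ or $M$ is a factor which is not semifinite, that is, of type $\mathrm{III}$.

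\emph{Main obstacle.} The only non-formal input is the Radon--Nikodym description of normal states on semifinite factors together with the formula $\sigma_t^{\tau(h\cdot)}=\Ad{h^{it}}$. Both are standard (Pedersen--Takesaki; see \cite{Tak03}), and I would cite them. If $h$ is unbounded one needs a little care, but the spectral projections of $h$ still lie in $M$ and commute with $h^{it}$, so the argument goes through unchanged.
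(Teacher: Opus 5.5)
Your proof is correct and follows the same route as the paper: you get factoriality from $\mathcal{Z}(M)\subseteq M^\vphi$, and in the semifinite case ergodicity forces $\vphi$ to be tracial, whence $M=M^\vphi=\C$. The only difference is that the paper cites \cite[Remark 1.1]{GGLN25} for the tracial step, whereas you verify it directly via the density $h$ with $\vphi=\tau(h\,\cdot)$ and $\sigma_t^\vphi=\Ad{h^{it}}$.
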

\begin{proof}
Since $M'\cap M \subset M^\vphi$, it follows that $M$ is a factor. If $M$ is not type $\mathrm{III}$, then it is semifinite and it follows from \cite[Remark 1.1]{GGLN25} that $\vphi$ is tracial. But then $M=M^\vphi =\C$.
\end{proof}

\begin{lem}\label{lem:irred_corners}
Let $N\leq M$ and let $p\in N$ be a projection with central support $z$ in $N$. Then $ p N p \leq p M p$ is irreducible if and only if $ N z \leq z M z$ is irreducible.
\end{lem}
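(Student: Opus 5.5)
We have to show that $(pNp)'\cap pMp=\C p$ holds if and only if $(Nz)'\cap zMz=\C z$. The plan is to use the standard correspondence between relative commutants of a corner and of its central support: the map $x\mapsto xp$ should identify $(Nz)'\cap zMz$ with $(pNp)'\cap pMp$. First record that $z$ is central in $N$. Hence $Nz$ is a von Neumann algebra with unit $z$, and $p\le z$ because $p=pz$.

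\emph{($\Leftarrow$).} Suppose $Nz\le zMz$ is irreducible and let $y\in (pNp)'\cap pMp$. Build a candidate extension
\[
x:=\sum_i v_i y v_i^*,
\]
where $\{v_i\}\subset N$ is a family of partial isometries with $v_i^*v_i\le p$ and $\sum_i v_iv_i^*=z$. Such a family exists because $z$ is the central support of $p$ in $N$; comparison theory lets one write $z$ as a sum of projections subequivalent to $p$ in $N$.

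Next check that $x\in (Nz)'\cap zMz$. For $a\in N$,
\[
v_i^*av_j\in pNp,
\]
and therefore $y$ commutes with it. Then
\[
ax=\sum_{i,j} v_iv_i^*\,a\,v_jyv_j^*=\sum_{i,j}v_i\,(v_i^*av_j)\,y\,v_j^*=\sum_{i,j}v_i\,y\,(v_i^*av_j)\,v_j^*=xa.
\]
This uses $yv_j^*v_j=y$, which holds because $y=ypy$-type relations give $y\le p$ in support and $v_j^*v_j\le p$; I need the compatibility $y v_j^*v_j = v_j^*v_j y$, and this follows since $v_j^*v_j\in pNp$. The double sums must be justified as strong limits, and $x$ is bounded since the ranges $v_iv_i^*$ are orthogonal. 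Irreducibility then gives $x=\lambda z$. Compressing by $p$ should give $y=pxp=\lambda p$. To get this, choose the family so that one partial isometry is $v_0=p$, by starting a maximal family from $p$ itself. The remaining $v_i$ then have range orthogonal to $p$, so $pxp=y$.

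\emph{($\Rightarrow$).} Suppose $pNp\le pMp$ is irreducible and let $x\in (Nz)'\cap zMz$. Then $xp=pxp\in (pNp)'\cap pMp$, so $xp=\lambda p$. For each $i$, since $x$ commutes with $v_i\in Nz$,
\[
x\,v_iv_i^*=v_i\,x\,v_i^* = v_i\,xp\,v_i^*=\lambda\, v_iv_i^*.
\]
Summing over $i$ gives $x=xz=\lambda z$.

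The main obstacle is the existence of the family $\{v_i\}$ with $v_0=p$, $v_i^*v_i\le p$ and $\sum_i v_iv_i^*=z$. I would take a maximal family of partial isometries in $N$ with sources under $p$ and mutually orthogonal ranges under $z$, including $p$ itself. If the sum of the ranges is some $q<z$, then $z-q$ is nonzero and has central support in $N$ meeting that of $p$. Comparison then yields a nonzero subprojection of $z-q$ equivalent to a subprojection of $p$, which contradicts maximality.
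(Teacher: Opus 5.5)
Your proof is correct and follows essentially the same route as the paper. Both use a family of partial isometries $v_i\in Nz$ with $v_i^*v_i\le p$, $\sum_i v_iv_i^*=z$ and $p$ among them, then argue via $x=\sum_i v_ixv_i^*=\sum_i v_i\,pxp\,v_i^*$ in one direction and via the extension $y\mapsto \sum_i v_iyv_i^*$ followed by compression by $p$ in the other (you also supply the commutation check and the maximality/comparison argument that the paper leaves implicit). One correction: your aside that $yv_j^*v_j=y$ is false in general (it requires $v_j^*v_j=p$), but your displayed computation never uses it; it only needs $v_i^*av_j\in pNp$, $az=za$ and $xz=x$.
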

\begin{proof}
We fix a family $\{v_i\in N z\colon i\in I\}$ of partial isometries satisfying $v_i^* v_i \leq p$ for all $i\in I$ and $\sum_i v_i v_i^* =z$. We can (and do) arrange that $p=v_i$ for some $i\in I$.

Suppose $p N p \leq p M p$ is irreducible. For $x\in (Nz)'\cap z M z$, we have $pxp = xp \in (pNp)'\cap p M p =\C p$, say with $pxp = \alpha p$. Then
    \[
        x = \sum_{i\in I} x v_i v_i^* = \sum_{i\in I} v_i x v_i^* = \sum_{i\in I} v_i p x p v_i^* = \sum_{i\in I} v_i \alpha v_i^* = \alpha z,
    \]
so that $Nz \leq z M z$ is irreducible.

Conversely, if $Nz \leq z M z$ is irreducible, then it is straightforward to check that for any $x\in (pNp)'\cap pMp$ the element
    \[
        \sum_{i\in I} v_i x v_i^*
    \]
lies in $(Nz)'\cap (zMz) = \C z$. Compressing by $p$ then yields that $x\in \C p$, and so $p N p \leq pMp$ is irreducible.
\end{proof}

\begin{lem}\label{lem:amalgamated_osmosis}
Consider an amalgamated free product
    \[
        (M,E) = (M_1, E_1) \underset{A}{*} (M_2,E_2).
    \]
Let $A\leq B \leq M_1$ be an intermediate algebra with an $E_1$-preserving faithful normal conditional expectation $F_1\colon M_1\to B$. Then there exists a unique $E$-preserving faithful normal conditional expectation $F\colon M\to B$ and one has 
    \[
        (M,F)\cong (M_1, F_1) \underset{B}{*} ( M_2 \vee B, F|_{M_2\vee B}).
    \]
\end{lem}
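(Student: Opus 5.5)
First I will construct $F$. Set $F := F_1\circ E_1\circ E$? That is wrong, since $F$ has to land in $B$ and be defined on all of $M$. The natural candidate is obtained from the free product structure. Take a reduced word $x = x_1x_2\cdots x_n$ with $x_j\in M_{i_j}\ominus A$ (that is, $E_{i_j}(x_j)=0$) and alternating indices. Define $F(x)=0$ when $n\ge 2$ or when $x\in M_2\ominus A$. Define $F(x)=F_1(x)$ when $x\in M_1$.

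To see that this is a well-defined normal conditional expectation, I would realize it spatially. Let $L^2(B)\subseteq L^2(M_1)\subseteq L^2(M)$, all taken with respect to $\vphi\circ E$ for a faithful normal state $\vphi$ on $A$; one may assume $A$ is $\sigma$-finite, or work with Hilbert $A$-modules in general. The Jones projection $e_B$ onto $L^2(B,\vphi\circ E)$ satisfies $e_B x e_B = F(x)e_B$ on the dense $*$-algebra of reduced words. Indeed, for $b\in B$ the vector $x b\Omega$ with $x$ a reduced word of length $\ge 2$, or lying in $M_2\ominus A$, is orthogonal to $L^2(M_1)$. The point is that $xb$ is again a reduced word whose first letter lies outside $M_1$, or whose length is at least $2$, after absorbing $b$ into the last letter. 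Since $F_1$ is $E_1$-preserving, $\vphi\circ E$ restricted to $M_1$ is $F_1$-invariant. So by Takesaki's theorem the existence of a $\vphi\circ E$-preserving expectation onto $B$ is equivalent to $\sigma^{\vphi\circ E}$-invariance of $B$. That invariance holds because $\sigma^{\vphi\circ E}$ restricted to $M_1$ is $\sigma^{\vphi\circ E_1}$, which leaves $B$ invariant. Uniqueness also follows from Takesaki, since any $E$-preserving expectation preserves $\vphi\circ E$ and such an expectation is unique. Running this over a faithful family of states on $A$ gives $E$-preservation rather than preservation of a single state.

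Next I identify the free product. Put $N_2:=M_2\vee B$. It suffices to verify two things: $M_1\vee N_2=M$, which is clear, and freeness with amalgamation over $B$ with respect to $F$. For the latter, take alternating elements $y_1,\dots,y_n$ drawn from $M_1\ominus_F B$ and $N_2\ominus_F B$, and show $F(y_1\cdots y_n)=0$. Then the universal property or uniqueness of amalgamated free products (Ueda's construction, which is determined by the GNS data of $F$ and faithfulness) gives the isomorphism $(M,F)\cong (M_1,F_1)\underset{B}{*}(N_2,F|_{N_2})$. Here $F|_{M_1}=F_1$ by construction.

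The main obstacle is the freeness computation, because elements of $N_2\ominus_F B$ are not single letters. The plan is to use density. $N_2$ is the weak closure of the span of $B$ together with words $b_0 m_1 b_1 m_2\cdots m_k b_k$, where $m_j\in M_2\ominus A$ and $b_j\in B$. Write $b_j=E_1(b_j)+(b_j-E_1(b_j))$, so each $b_j$ splits into a piece in $A$ and a piece in $B\ominus A\subseteq M_1\ominus A$. Expanding such a word therefore gives a sum of reduced words in $M$ over the letters of $M_1\ominus A$ and $M_2\ominus A$. In each of these, every $M_1$-letter lies in $B\ominus A$, and at least one $M_2$-letter appears. It follows that $N_2\ominus_F B$ is the closed span of reduced words containing at least one $M_2$-letter and with all $M_1$-letters in $B$. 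In $y_1\cdots y_n$, elements $y\in M_1\ominus_F B$ decompose as $y-E_1(y)$ with $E_1(y)=E_1F_1(y)\in A$. Concatenation may produce products of adjacent $M_1$-letters, namely $(B\ominus A)\cdot(M_1\ominus_{F_1}B)\cdot(B\ominus A)$. These remain in $M_1\ominus_{F_1}B$ plus an $A$-part, since $F_1$ is $B$-bimodular. Doing the bookkeeping by induction on word length shows that every resulting term either has length $\ge 2$ or is a single letter in $M_2\ominus A$. In both cases $F$ kills the term. Normality and Kaplansky density then pass the conclusion from the dense subalgebra to the weak closures.
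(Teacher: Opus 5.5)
Your argument is correct in substance but takes a more roundabout route to the expectation than the paper does.

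\textbf{Existence and uniqueness.} The paper simply sets $F:=F_1\circ E^1$, where $E^1\colon M\to M_1$ is the canonical expectation with $E_1\circ E^1=E$. Then $E\circ F=E_1\circ F_1\circ E^1=E$ in one line. Uniqueness is purely algebraic: if $E\circ F'=E$ and $b=F[x]-F'[x]$, then $E[b^*b]=E[b^*x]-E[b^*x]=0$.

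Your formula on reduced words is exactly $F_1\circ E^1$. This is the right replacement for the candidate $F_1\circ E_1\circ E$ you discarded, which is just $E$. Your Jones-projection/Takesaki argument only reconstructs this expectation. The input you use, $\sigma^{\vphi\circ E}|_{M_1}=\sigma^{\vphi\circ E_1}$, already rests on the existence of a $\vphi\circ E$-preserving expectation onto $M_1$, i.e.\ on $E^1$. So the spatial detour is unnecessary. It also costs you a $\sigma$-finiteness assumption on $A$, for both existence and uniqueness, which the paper's proof does not need.

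\textbf{Freeness.} Both proofs reorganize an alternating product into a reduced word with letters from $M_1\cap\ker F_1\subset M_1\cap\ker E_1$ and $M_2\cap\ker E_2$. The paper then uses only that this word $x$ lies in $\ker E$, and concludes $F[x]=0$ from $E[F[x]^*F[x]]=E[F[x]^*x]=0$. You instead use that $F$ kills every reduced word containing an $M_2$-letter, because $E^1$ does. That is more transparent once $F=F_1\circ E^1$ is known.

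\textbf{Points to tighten.}
\begin{enumerate}
\item The merged letters $b\,y\,b'$ (with $b,b'\in B$ and $F_1[y]=0$) have \emph{no} $A$-part. Indeed $F_1[byb']=bF_1[y]b'=0$ and $E_1=E_1\circ F_1$. This vanishing is the whole reason no collapse occurs: a nonzero $A$-part could let adjacent $M_2$-letters merge and the word shrink into $B$. State it explicitly rather than deferring to an unspecified induction on word length.
\item ``Absorbing $b$ into the last letter'' only makes sense when that letter lies in $M_1$. Otherwise write $xb=xE_1[b]+x(b-E_1[b])$, which still lies in the orthogonal complement of $L^2(M_1)$.
\end{enumerate}
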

\begin{proof}
Let $E^1\colon M\to M_1$ be the unique faithful normal conditional expectation satisfying $E_1\circ E^1 = E$. Then $F:=F_1\circ E^1$ is a faithful normal conditional expectation from $M$ onto $B$ satisfying
    \[
        E\circ F = E \circ F_1 \circ E^1 = E_1 \circ F_1 \circ E^1 = E_1\circ E^1 = E.
    \]
If $F'\colon M\to B$ also satisfies $E\circ F'= E$, then for any $x\in M$ setting $b:= F[x] - F'[x]$ one has
    \[
        E[ b^* b] = E[ F[b^*x] - F'[b^*x]] = E[b^*x] - E[b^* x]=0,
    \]
so that $b=0$ by the faithfulness of $E$. Hence $F=F'$ is unique.

Now, $M$ is generated by $M_1$ and $M_2\vee B$, and so in order to identify it with the amalgamated free product over $B$ it suffices to show that $M_1$ and $M_2\vee B$ are free with amalgamation with respect to $F$. Using $E\circ F=E$, we see that $(M_2\vee B)\cap \ker{F} \subset (M_2 \vee B)\cap \ker{E}$, and so the former set is spanned by alternating products of elements centered with respect to $E$. An alternating product of such words and elements from $M_1\cap \ker{F_1}$ can then be reorganized as an alternating product of elements from $M_1\cap \ker{F_1}\subset M_1\cap \ker{E_1}$ and $M_2\cap \ker{E_2}$, which lies in $\ker{E}$. Denote such an alternating product by $x$, and note $b^*x$ still lies in $\ker{E}$ for any $b\in B\cap \ker{E}$ since $x=ay$ with either $a\in M_1\cap \ker{F_1}$ so that $E_1[b^* a] = E_1[b^* F_1[a]]=0$, or $a\in M_2\cap \ker{E_2}$ and $E[b^* x]=0$ by freeness over $A$. Since $E[F[x]]= E[x]=0$, we can take $b:=F[x]$ and we see that
    \[
        E[ F[x]^* F[x]]= E[ F[b^* x]] = E[b^*x]=0.
    \]
Since $E$ is faithful, it follows that $F[x]=0$.
\end{proof}

\section{Free product subalgebras}

We begin with a mild generalization of \cite[Proposition 3.5]{Ued11a} to the operator-valued setting. The overall structure of the argument is nearly identical, but there are enough small changes to warrant including a fully detailed proof.

\begin{prop}\label{prop:amalgmated_Ueda}
Consider the amalgamated free product
    \[
        (M,E)=(M_1, E_1)\underset{B}{*}(M_2,E_2).
    \]
Let $F\colon M\to B$ be another $E^1$-invariant faithful normal conditional expectation, where $E^1:M\rightarrow M_1$ is the unique conditional expectation satisfying $E_1\circ E^1=E$. Let $\psi\in M_*$ be an $F$-invariant faithful normal state. Suppose there exists a projection $p\in B^\psi$ and sequences $(u_n)_{n\in\N}, (v_n)_{n\in \N} \subset M_1^\psi$ satisfying $u_n^*u_n=u_nu_n^*=v_n^*v_n=v_nv_n^*=p$ and $E[u_n]=E[u_n^*u_m]=F[v_n]=F[v_n^*v_m]=0$ for all $n\neq m$. Then for any free ultrafilter $\omega\in \beta\N\setminus \N$ and any $x\in \{u_1,v_1,u_2,v_2,\ldots \}'\cap (pMp)^\omega$ one has
    \begin{align}\label{Eqn:comm_bound_on_distance_to_subalg}
        \| y( x- E^{1,\omega}[x])\|_{\psi^\omega} \leq \| [x,y] \|_{\psi^\omega} \qquad y\in M_2\cap \ker{E},
    \end{align}
where $E^{1,\omega}\colon M^\omega \to M_1^\omega$ is the canonical lifting of $E^1$. In particular, if there exists $y\in pM_2 p \cap \ker{E}$ that is invertible in $p M_2 p$, then $\{y,u_1,v_1,u_2,v_2,\ldots\}'\cap (pMp)^\omega \subset (pBp)^\omega$.
\end{prop}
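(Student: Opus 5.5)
We follow the structure of Ueda's proof of \cite[Proposition 3.5]{Ued11a}, replacing scalar states by the conditional expectations $E$ and $F$. We work in $L^2(M,\psi)$ and its ultrapower, and fix $x\in \{u_1,v_1,u_2,v_2,\ldots\}'\cap (pMp)^\omega$ represented by a bounded sequence $(x_k)_k$. Since $\psi\circ F=\psi$ and $F\circ E^1 = F$, we may replace $x$ by $x - E^{1,\omega}[x]$. This element still commutes with the $u_n,v_n$, because $E^1$ is $M_1$-bimodular and $u_n,v_n\in M_1$. We may therefore assume $E^{1,\omega}[x]=0$, so that each $x_k$ may be taken in the closure of the span of reduced words in $M$ not lying in $M_1$.

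\textbf{Step 1: decompose $L^2$ by words.} Split the space of reduced words (relative to $B$) according to their first and last letters. Write $\mathcal{H}_{\ell}$ for the closed span of words beginning with a letter from $M_1\ominus B$, and $\mathcal{H}_{r}$ for the span of words ending with a letter from $M_1\ominus B$. Since $y\in M_2\cap\ker E$, left multiplication by $y$ sends a word starting in $M_2$ to a word that is either still reduced or collapses. The key estimate is then
\[
\|yx\|^2 \le \|[x,y]\|^2 ,
\]
up to terms that we show vanish along $\omega$. The mechanism is the following. Conjugating by $u_n$ (respectively $v_n$) and using $x=u_nxu_n^*$, the components of $x$ in $\mathcal{H}_\ell$ and $\mathcal{H}_r$ are controlled. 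The vectors $u_n(\cdot)$, for distinct $n$, applied to words starting with an $M_2$-letter produce mutually orthogonal pieces, because $E[u_n^*u_m]=0$ and $E[u_n]=0$. Hence the projection of $x$ onto words starting (or ending) in $M_1\ominus B$ has norm at most $\|x\|/\sqrt{N}$ for every $N$, and is therefore $0$ in the ultrapower. The role of $v_n$ with $F[v_n^*v_m]=0$ is to handle the part of $M_1$ that is not orthogonal with respect to $E$ but is with respect to $F$. Orthogonality of the relevant $L^2(\psi)$-pieces needs $\psi=\psi\circ F$ and $u_n,v_n\in M_1^\psi$, since these let us move $u_n$ across to the right side via $J$ and the modular group. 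So $x$ essentially lives on words that both begin and end in $M_2\ominus B$.

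\textbf{Step 2: compare $yx$ and $xy$.} For such $x$, $yx$ lies in the span of words beginning in $M_2$ (after collapsing, it may begin in $B$ or continue), while $xy$ ends in $M_2$. Following Ueda, one shows $\langle yx, xy\rangle_{\psi^\omega}$ is controlled, or more precisely that $yx$ is asymptotically orthogonal to $xy$ after subtracting the pieces that Step 1 kills. Then $\|[x,y]\|^2 = \|yx\|^2+\|xy\|^2 - 2\mathrm{Re}\langle yx,xy\rangle \ge \|yx\|^2$, which gives \eqref{Eqn:comm_bound_on_distance_to_subalg}. I expect this step to be the main obstacle. The difficulty is the bookkeeping of the $B$-valued inner products and of the collapsed words $yx_k$ when the first letter of $x_k$ combines with $y$ to give a $B$-component. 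I would treat these by decomposing $y\,w = (yw_1 - E_2[yw_1])w' + E_2[yw_1]w'$ and using that the second term begins in $M_1\ominus B$ or lies in $B$, where Step 1 again applies.

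\textbf{Step 3: the final claim.} If $y\in pM_2p\cap\ker E$ is invertible in $pM_2p$ and $x$ also commutes with $y$, then \eqref{Eqn:comm_bound_on_distance_to_subalg} gives $y(x-E^{1,\omega}[x])=0$. Since $y$ is invertible in $pM_2p$ and $x - E^{1,\omega}[x]\in (pMp)^\omega$, we get $x=E^{1,\omega}[x]\in (pM_1p)^\omega$. Next, $x$ commutes with $y$ and lies in $M_1^\omega$. Applying \eqref{Eqn:comm_bound_on_distance_to_subalg} with the roles of $M_1$ and $M_2$ reversed is not available, so instead we use freeness directly. Writing $x_0 = x - E^\omega[x]$, which lies in $M_1\ominus B$, the words $yx_0$ and $x_0y$ are reduced and orthogonal. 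Thus $0=\|[x_0,y]\|^2\ge \|yx_0\|^2$, and invertibility of $y$ gives $x_0=0$. Here $E^\omega[x]$ commutes with $y$ only up to a $B$-term, so we use $[x,y]=0$ together with the fact that $[E[x],y]\in$ words of length one to separate components. This yields $x\in (pBp)^\omega$.
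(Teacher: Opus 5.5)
There is a genuine gap, and it starts with the conclusion of Step 1, which is reversed. The mechanism you describe is correct. Since $u_n\in M_1^\psi$, one has $u_nJ_\psi u_nJ_\psi w=u_nwu_n^*$. For distinct $n$ these vectors are orthogonal when $w$ is a word \emph{beginning} with a letter of $\mr{M_2}=M_2\cap\ker E$: then $u_m^*u_n\in\mr{M_1}$ sits between two $\mr{M_2}$-letters, and $\psi=\psi\circ F\circ E^1$ kills the resulting reduced word. But what this kills is the component of $x$ on words beginning with an $\mr{M_2}$-letter. The component on words beginning in $\mr{M_1}$ and ending in $\mr{M_2}$ is killed with the $v_n$ instead. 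There $u_m^*u_n$ collapses against the first letters, and the right end of the inner product is evaluated through $F$, which is exactly where $F[v_n^*v_m]=0$ is used. What survives is $x-E^{1,\omega}[x]=(P(x_k))_\omega$, where $P$ projects onto the closed span of words beginning with a letter of $\mr{M_1}$ and ending with a letter of $\mt{M_1}=M_1\cap\ker F$. This asymmetric centering is what makes the word decomposition of $L^2(M,\psi)$ orthogonal when $\psi$ is only $F$-invariant. Your assertion that $x$ lives on words beginning and ending in $M_2\ominus B$ describes a subspace that is actually killed, and Step 2 cannot be run from it: for a single letter $b\in\mr{M_2}$, $yb$ and $by$ both lie in $M_2$ and are not orthogonal in general. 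The collapsing you flag as the main obstacle is an artifact of this reversal. In the correct picture, $y(x-E^{1,\omega}[x])$ is represented by reduced words of length at least $4$ beginning in $\mr{M_2}$ and ending in $\mt{M_1}$, so nothing collapses.

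Even after this correction, three ingredients are missing.

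First, replacing $x$ by $x_0:=x-E^{1,\omega}[x]$ changes the right-hand side from $\|[x,y]\|$ to $\|[x_0,y]\|$. The paper instead writes $yx_0=x_0y-[x,y]-[y,E^{1,\omega}[x]]$ and uses that $yx_0$ is orthogonal to $[y,E^{1,\omega}[x]]$, whose representatives lie in $\mr{M_2}+\mr{M_2}\mt{M_1}+\mr{M_1}\mr{M_2}$.

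Second, the orthogonality of $yx_0$ and $x_0y$ in $L^2(M,\psi)$ needs an idea you do not supply. Since $y\notin M^\psi$ and $\sigma^\psi$ need not preserve $M_2$, right multiplication by $y$ is not $J_\psi y^*J_\psi$ and does not obviously respect the word structure. The paper smooths $y$ to $y_n=\sqrt{n/\pi}\int_\R e^{-nt^2}\sigma_t^\psi(y)\,dt$. It then uses that the Connes cocycle $(D\psi:D\vphi)_t$ lies in $M_1$ for $\vphi:=\psi|_B\circ E$, whose modular group does preserve $\mr{M_2}$. Hence $y_n$ lies in the $\sigma$-weak closure of $\operatorname{span} M_1\mr{M_2}M_1$. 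Then $x_0y_n=(J_\psi\sigma^\psi_{-i/2}(y_n^*)J_\psi P(x_k))_\omega$ lies in $L^2(M_1,\psi)$ plus words beginning in $\mr{M_1}$, so it is orthogonal to $yx_0$, and normality of $\psi^\omega$ lets $n\to\infty$.

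Third, in Step 3 you must center with $F$ rather than $E$. For $b,b'\in\mr{M_2}$ and $a\in M_1$ one has $\langle ba,b'\rangle_\psi=\psi(E[b'^*b]F[a])$. So $\mr{M_2}\mr{M_1}$ is not orthogonal to $\mr{M_2}$ in $L^2(M,\psi)$, and your separation of $y(x-E^\omega[x])$ from $[E^\omega[x],y]$ fails. With $x-F^\omega[x]$, which has representatives in $\mt{M_1}$, the vector $y(x-F^\omega[x])$ is orthogonal to both $(x-F^\omega[x])y\in\mr{M_2}+\mr{M_1}\mr{M_2}$ and $[y,F^\omega[x]]\in\mr{M_2}$. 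Invertibility of $y$ then gives $x=F^\omega[x]\in(pBp)^\omega$.
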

\begin{proof}
Denote $ \mr M_i:=M_i\cap \ker E$ for $i=1,2$ and $\mt{M_1}:=M_1 \cap \ker F$. Let $\H_{1,1}, \H_{1,2}, \H_{2,1},\H_{2,2}\subset L^2(M,\psi)$ be the closed subspaces generated by  
     \[
        \underbrace{\mr{M_1} \mr{M_2}\cdots \mt{M_1}}_{\text{length}\geq 3},\qquad \underbrace{\mr{M_1} \mr{M_2}\cdots \mr{M_2}}_{\text{length}\geq 2}, \qquad \underbrace{\mr{M_2} \mr{M_1}\cdots \mt{M_1}}_{\text{length}\geq 2}, \qquad \underbrace{\mr{M_2} \mr{M_1}\cdots \mr{M_2}}_{\text{length}\geq 1},
    \]
respectively. Note that the product of the adjoint of any of the above (non-closed) subspaces against any of the other subspaces lies in the kernel of $F$; for example, the inclusion
    \[
        (\mr{M_1} \mr{M_2}\cdots \mr{M_2})^*\mr{M_1} \mr{M_2}\cdots \mt{M_1} = \mr{M_2} \cdots \mr{M_2} (\mr{M_1}\mr{M_1}) \mr{M_2} \cdots \mt{M_1} \subset \mt{M_1} + \ker{E^1} \subset \ker{F}
    \]
can be seen by successively centering the middle terms with respect to $E$ and noting that $\mr{M_2}\cdots \mr{M_2}\mt{M_1}\subset \ker{E^1}$. Consequently, one has
    \begin{align*}
        L^2(M,\psi)=L^2(M_1,\psi)\oplus\H_{1,1}\oplus \H_{1,2}\oplus \H_{2,1}\oplus \H_{2,2}.
    \end{align*}
 Set $\K_{1,2}^{(n)}:=v_nJ_\psi v_n J_\psi\H_{1,2},\quad  \K_{2,1}^{(n)}:=u_nJ_\psi u_n J_\psi\H_{2,1}$, and $\K_{2,2}^{(n)}:=u_nJ_\psi u_n J_\psi\H_{2,2}$. Our assumptions on $(u_n)_{n\in \N}$ and $(v_n)_{n\in \N}$ imply that $\K_{i,j}^{(n)}$ forms a sequence of orthonormal subspaces for every $(i,j)\in \{(1,2),(2,1),(2,2)\}$. Let $P_{i,j}, Q_{i,j}^{(n)}\subset B(L^2(M,\psi))$ be the orthogonal projection onto $\H_{i,j}$ and $\K_{i,j}^{(n)}$ for $(i,j)\neq (1,1)$ and $n\in \N$ respectively. Since each $\H_{i,j}$ is a $B$-bimodule, one has $[pJ_\psi pJ_\psi, P_{i,j}]=0$ and it follows that $(v_nJ_\psi v_n J_\psi)\circ P_{1,2}\circ (v_n^*J_\psi v_n^*J_\psi)=Q_{1,2}^{(n)}$ and $(u_nJ_\psi u_n J_\psi)\circ P_{i,j}\circ (u_n^*J_\psi u_n^*J_\psi)=Q_{i,j}^{(n)}$ for $(i,j)\in \{(2,1), (2,2)\}$.
 
Fix arbitrary $x=(x_k)_\omega\in \{u_1,v_1,u_2,v_2\ldots, u_n,v_n,\ldots \}'\cap (pMp)^\omega$. Then one has the following:
\begin{align*}
    \lim_{k\rightarrow \omega}\|P_{1,2}(x_k)\|_{\psi}&=\lim_{k\rightarrow\omega}\|pJ_\psi pJ_\psi P_{1,2}(px_k)\|_{\psi}\\
&=\lim_{k\rightarrow\omega}\|v_nJ_\psi v_nJ_\psi P_{1,2}(v_n^*v_nx_k)\|_{\psi}\\
&\leq \lim_{k\rightarrow\omega}\|Q_{1,2}^{(n)}(x_k)\|_\psi+\|v_nJ_\psi v_nJ_\psi P_{1,2}(v_n^*[v_n,x_k])\|_\psi\\
&=\lim_{k\rightarrow\omega}\|Q_{1,2}^{(n)}(x_k)\|_\psi,
\end{align*}
where the last equality follows since $\underset{k\rightarrow \omega}{\lim}\|v_nJ_\psi v_nJ_\psi P_{1,2}(v_n^*[v_n,x_k])\|_\psi \leq \underset{k\rightarrow \omega}{\lim}\|[v_n,x_k]\|_\psi=\| [v_n,x]\|_{\psi^\omega}=0$. Similarly, by replacing the role of $v_n$ with $u_n$, it follows that $ \underset{k\rightarrow \omega}{\lim}\|P_{2,1}(x_k)\|_{\psi}\leq \underset{k\rightarrow \omega}{\lim}\|Q_{2,1}^{(n)}(x_k)\|_\psi$ and $ \underset{k\rightarrow \omega}{\lim}\|P_{2,2}(x_k)\|_{\psi}\leq \underset{k\rightarrow \omega}{\lim}\|Q_{2,2}^{(n)}(x_k)\|_\psi$. Hence for $(i,j)\neq (1,1)$, we have
\begin{align*}
  \|x\|_{\psi^\omega}^2=\lim_{k\rightarrow \omega}\|x_k\|_{\psi}^2 \geq \lim_{k\rightarrow \omega}\sum_{n\in \N}\|Q_{i,j}^{(n)}(x_k)\|_{\psi}^2\geq \sum_{n\in \N}\lim_{k\rightarrow \omega}\|Q_{i,j}^{(n)}(x_k)\|_{\psi}^2 \geq \sum_{n\in \N}\lim_{k\rightarrow \omega}\|P_{i,j}(x_k)\|_{\psi}^2,
\end{align*}
which in turn implies that 
\begin{align}\label{UltraPdt Projection}
\lim_{k\rightarrow \omega}\|P_{i,j}(x_k)\|_{\psi}=0 \text{ for } (i,j)\neq (1,1).
\end{align}
Now identify $L^2(M^\omega, \psi^\omega)$ as a closed subspace of $L^2(M,\psi)^\omega$ (see \cite[Section 2.2]{Ued11a}). Then the above equation implies  
\begin{align*}
    x-E^{1,\omega}[x]=(P_{1,1}(x_k))_{k\in \omega},
\end{align*}
where $E^{1,\omega}:M^\omega\rightarrow M_1^\omega$ is the canonical lifting of $E^1$ (or the  unique $\psi^\omega$-preserving conditional expectation).

Next we show that $y(x-E^{1,\omega}[x])$ is orthogonal to $[y,E^{1,\omega}[x]]$ and $(x-E^{1,\omega}[x])y$ for $y \in\mr{ M_2}$. Fix $y \in \mr{M_2}$. We have $y(x-E^{1,\omega}[x])=(yP_{1,1}(x_k))_{k\in \omega}$ and $yP_{1,1}(x_k)$ lies in the closed linear span of $\underbrace{\mr{M_2}\mr{M_1} \mr{M_2}\cdots \mt{M_1}}_{\text{length}\geq 4}$ for $k\in \omega$. Notice that $[y,E^{1,\omega}[x]]=yE^{1,\omega}[x]-E^{1,\omega}[x]y=(yE^1(x_k)-E^1(x_k)y)_{k\in\omega}$, and for $k\in \omega$ we have 
\begin{align*}
   yE^1(x_k)-E^1(x_k)y &\in(\mr M_2M_1-M_1\mr M_2)\subset \mr M_2 \oplus \mr{M_2}\mt{M_1}\oplus \mr{M_1}\mr{M_2},
\end{align*}
by centering the first instance of $E^1(x_k)$ term with respect to $F$ and the second with respect to $E$. Hence, we have $y(x-E^{1,\omega}[x])$ is orthogonal to $[y,E^{1,\omega}[x]]$. We proceed to show the orthogonality of $y(x-E^{1,\omega}[x])$ and $(x-E^{1,\omega}[x])y$ using the Connes cocycle derivative between $\psi$ and the state $\vphi:=\psi|_B\circ E$. Set 
    \[
        y_n:=\sqrt{\frac{n}{\pi}}\int_\R e^{-n t^2}\sigma_t^\psi(y) dt =\sqrt{\frac{n}{\pi}}\int_\R e^{- nt^2}(D_\psi : D_\varphi)_t \sigma_t^\varphi(y)(D_\psi : D_\varphi)_t^*dt.
    \]
Note that unlike $\sigma^\psi$, the modular automorphism group $\sigma^\vphi$ leaves $M_2$ globally invariant since $\vphi\circ E^2 = \vphi$ for $E^2\colon M\to M_2$ the unique conditional expectation satisfying $E_2\circ E^2 = E$. Moreover, since $\vphi|_{M_2} = \psi|_B\circ E_2$, it follows that $\sigma_t^{\vphi}(y)=\sigma_t^{\vphi|_{M_2}}(y)\in \mr{M_2}$. Additionally, using $\vphi = \psi|_B\circ E_1\circ E^1 = \vphi|_{M_1}\circ E^1$ and $\psi = \psi|_{M_1}\circ E^1$, it follows from \cite[Corollary IX.4.22.(ii)]{Tak03} that
    \[
        (D_\psi: D_\varphi)_t = (D_{\psi|_{M_1}} : D_{\varphi|_{M_1}})_t\in M_1
    \]
for all $t\in \R$. Thus $y_n$ belongs to the $\sigma$-weak operator topology closure of the linear span of $M_1\mr{M_2}M_1$. Additionally, each $y_n$ is analytic with respect to $\sigma^\psi$ and  $y_n \to y$  in the $\sigma$-weak operator topology as $n \rightarrow \infty$. For each $n\in \N$ we have 
    \begin{align*}
        \|(x-E^{1,\omega}[x])y_n- &(J_\psi \sigma_{-i/2}^\psi(y_n^*)J_\psi P_{(1,1)}(x_k))_\omega\|_{\psi^\omega}\\
            &=\lim_{k\rightarrow\omega}\|(x_k-E^1[x_k])y_n- J_\psi \sigma_{-i/2}^\psi(y_n^*)J_\psi P_{(1,1)}(x_k)\|_{\psi}\\
            &=\lim_{k\rightarrow\omega}\|(J_\psi\sigma_{-i/2}^\psi(y_n^*)J_\psi)(x_k-E^1[x_k]-P_{(1,1)}(x_k))\|_{\psi}\\
            &\leq\|J_\psi \sigma_{-i/2}^\psi(y_n^*)J_\psi\|\lim_{k\rightarrow\omega}\|P_{(1,2)}(x_k)+P_{(2,1)}(x_k)+P_{(2,2)}(x_k)\|_{\psi}.
\end{align*}
Hence it follows from \eqref{UltraPdt Projection} that $(x-E^{1,\omega}[x])y_n= (J_\psi\sigma_{-i/2}^\psi(y_n^*)J_\psi P_{(1,1)}(x_k))_\omega$. Further we note that\\
\begin{align*}
  J_\psi \sigma_{-i/2}^\psi(y_n^*)J_\psi P_{(1,1)}(x_k)&\in J_\psi \sigma_{-i/2}^\psi(y_n^*)J_\psi \overline{\text{span}}\{\mr{M_1} \mr{M_2}\cdots \mt{M_1}\}\\
  &\subset \overline{\text{span}}\{\mr{M_1} \mr{M_2}\cdots \mt{M_1}y_n\}\\
  &\subset \overline{\text{span}}\{\mr{M_1} \mr{M_2}\cdots \mt{M_1} (M_1\mr{M_2}M_1)\} \\
  &\subset L^2(M_1,\psi) \oplus \H_{1,1} \oplus \H_{1,2},
\end{align*}
where the last containment follows by centering the middle terms with respect to $E$ and centering the last $M_1$ term with respect to $F$. As noted above, $y P_{1,1}(x_k) \in \H_{2,1}$, and so we can conclude that $y(x-E^{1,\omega}[x]) = (y P_{1,1}(x_k))_{k\in \omega}$ and $(x-E^{1,\omega}[x])y_n$ are orthogonal for all $n\in \N$. But then the orthogonality of $y(x-E^{1,\omega}[x])$ and $(x-E^{1,\omega}[x])y$ follows from the normality of $\psi^\omega$:
\begin{align*}
    \< (x-E^{1,\omega}[x])y, y(x-E^{1,\omega}[x])\>_{\psi^\omega} &= \psi^\omega( (x - E^{1,\omega}[x])^* y^* (x-E^{1,\omega}[x])y)\\
        &= \lim_{n\to\infty} \psi^\omega( (x - E^{1,\omega}[x])^* y^* (x-E^{1,\omega}[x])y_n)\\
        &= \lim_{n\to\infty} \< (x-E^{1,\omega}[x])y_n, y(x-E^{1,\omega}[x])\>_{\psi^\omega}=0.
\end{align*}
Consequently, we have
    \begin{align*}
        \| y(x - E^{1,\omega}[x]) \|_{\psi^\omega}^2 &= \left| \< (x- E^{1,\omega}[x]) y - [x,y] - \left[y, E^{1,\omega}[x]\right], y(x- E^{1,\omega}[x]) \>_{\psi^\omega} \right|\\
            &= \left| \< [x,y], y(x- E^{1,\omega}[x])\>_{\psi^\omega} \right| \leq \| [x,y] \|_{\psi^\omega} \| y(x- E^{1,\omega}[x]) \|_{\psi^\omega},
    \end{align*}
which yields \eqref{Eqn:comm_bound_on_distance_to_subalg}.

Finally, suppose there exists $y\in p M_2 p\cap \ker{E}$ and $z\in p M_2p$ satisfying $zy=p$. If $x$ also commutes with this $y$, then \eqref{Eqn:comm_bound_on_distance_to_subalg} implies
    \[
        \|x - E^{1,\omega}[x]\|_{\psi^\omega} = \| p(x- E^{1,\omega}[x])\|_{\psi^\omega} \leq \|z\| \| y(x- E^{1,\omega}[x])\|_{\psi^\omega} \leq \|z\| \|[x,y]\|_{\psi^\omega} =0.
    \]
Thus $x = E^{1,\omega}[x]\in M_1^\omega$, so we can find a new representative sequence $x=(a_k)_{k\in \omega}$ with each $a_k\in M_1$ (namely, $a_k= E_1[x_k]$). Let $F^\omega\colon M^\omega \to B^\omega$ be the canonical lifting of $F$ (or the unique $\psi^\omega$-preserving conditional expectation), and observe that $y(x - F^\omega[x])=( y(a_k - F[a_k]))_{k\in \omega}$ has a representative sequence from $\mr{M_2}\mt{M_1}$. On the other hand, $[y, F^\omega[x]] = ( [y, F[a_k]])_{k\in \omega}$ has a representative sequence from $\mr{M_2}$ and $(x- F^\omega[x])y = ( (a_k - F[a_k]) y)_{k\in \omega}$ has a representative sequence from $\mt{M_1}\mr{M_2}\subset \mr{M_2}\oplus \mr{M_1}\mr{M_2}$. It follows that $y(x - F^\omega[x])$ is orthogonal to both $[y, F^\omega[x]]$ and $(x - F^\omega[x])y$, and using $[x,y]=0$ we have
    \begin{align*}
       \|x - F^\omega[x]\|_{\psi^\omega}^2 \leq \|z\|^2 \|y(x- F^\omega[x])\|_{\psi^\omega}^2 = \|z\|^2 \left| \<  (x- F^\omega[x])y  - [y, F^\omega[x]],  y(x- F^\omega[x])\>_{\psi^\omega}\right|=0.
    \end{align*}
Thus $x=F^\omega[x]\in (pBp)^\omega$ as claimed.
\end{proof}


The next result will be our main technical tool for proving Theorems~\ref{introthm:A} and \ref{introthm:B}. We note that the case when $B=\C$ and $p=1$ is well-known (see, for example, \cite[Theorem 3.7]{Ued11a}).

\begin{thm}\label{thm:weak_spectral_amalgamated}
Consider the amalgamated free product
    \[
        (M,E) = (M_1, E_1) \underset{B}{*} (M_2,E_2),
    \]
For each $i=1,2$, let $B\leq N_i\leq M_i$ be an intermediate algebra with an $E_i$-preserving faithful normal conditional expectation $F_i\colon M_i\to N_i$. Suppose there exists a minimal and central projection $p\in B$ so that $p N_1 p$ is diffuse and $p N_2 p\neq \C p$. Then $[p(N_1 \vee N_2) p]' \cap (p M p)^\omega = \C p$ for any free ultrafilter $\omega\in \beta \N\setminus \N$.
\end{thm}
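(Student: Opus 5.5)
The plan is to reduce to Proposition~\ref{prop:amalgmated_Ueda} after enlarging the amalgam on the corner $p$, so that the diffuseness of $pN_1p$ supplies unitaries in a centralizer. Since $p\in B$ is minimal and central, $pBp=\C p$, and $E$ restricted to $pMp$ has the form $E[pxp]=\varphi_p(x)p$ for a faithful normal state $\varphi_p$ on $pMp$.

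First I would pick a diffuse abelian subalgebra $A\leq pN_1p$ that is the range of a $\varphi_p$-preserving faithful normal conditional expectation. I would take $A$ inside the centralizer $(pN_1p)^{\varphi_p}$ when that algebra is diffuse. In general I would use a Connes cocycle perturbation of the state on the $p$-corner: since only $\varphi_p$ on the $p$-corner is affected, this can be arranged while leaving $B(1-p)$ untouched. Set $\tilde B:=A\oplus B(1-p)\leq N_1$, with conditional expectation $F_1\colon M_1\to\tilde B$ preserving $E_1$. Lemma~\ref{lem:amalgamated_osmosis} then rewrites
\[
(M,F)\cong (M_1,F_1)\underset{\tilde B}{*}(M_2\vee \tilde B, F|_{M_2\vee\tilde B}),
\]
and similarly for $N_1\vee N_2$ over $\tilde B$. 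Take $\psi:=\tau\circ F$ with $\tau$ a faithful normal state on $\tilde B$ whose restriction to $A$ agrees with $\varphi_p$. Then $p\in\tilde B^\psi$, and Haar-type unitaries in $A$ lie in $M_1^\psi$.

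Next I would choose a sequence of unitaries $u_n=v_n\in A$ (for example, Haar unitaries generating independent pieces of $A\cong L^\infty[0,1]$) with $u_n^*u_m$ centered for both $E$ and $F$ when $n\neq m$. Here $F$ is now the expectation onto $\tilde B$, so $F$ and $E$ agree on $pBp$ but $F$-centering inside $A$ must be checked directly, which is why $\tau|_A=\varphi_p$ is needed. Proposition~\ref{prop:amalgmated_Ueda}, applied with its amalgam $B$ replaced by $\tilde B$, its $M_2$ replaced by $M_2\vee\tilde B$, and its $E$ replaced by $F$, gives for $x\in[p(N_1\vee N_2)p]'\cap(pMp)^\omega$ the estimate
\[
\|y(x-E^{1,\omega}[x])\|\le\|[x,y]\|=0
\]
for every $y\in pN_2p\cap\ker E$. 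Since $pN_2p\neq\C p$, there is a nonzero such $y$, but it need not be invertible. I would sum $y^*y$ over a family of such $y$ whose left supports cover $p$ to conclude that $x=E^{1,\omega}[x]\in(pM_1p)^\omega$. I would then repeat the second half of the proof of Proposition~\ref{prop:amalgmated_Ueda}, using the orthogonality in $\mr{M_2}\mt{M_1}$, to push $x$ into $A^\omega$. Finally, for $x\in A^\omega$ commuting with a nonzero $y\in pN_2p\cap\ker E$, freeness of $A$ and $y$ with respect to $\varphi_p$ should force $y(x-\varphi_p^\omega(x))\perp(x-\varphi_p^\omega(x))y$, and hence $x\in\C p$.

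The main obstacle is the first step: producing a diffuse $A\leq pN_1p$ sitting in a state's centralizer, compatible with $E$, when $pN_1p$ may be of type III with $\varphi_p$ having a small centralizer. If the cocycle trick fails, I would instead allow $\psi$ to differ from $\varphi_p$ on $A$ and keep $F\neq E$. This is exactly the flexibility the Proposition builds in by using two sequences, $u_n$ centered for $E$ and $v_n$ centered for $F$.
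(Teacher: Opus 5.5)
There is a genuine gap, and it lies in the construction at the center of your argument: enlarging the amalgam to $\tilde B=A\oplus B(1-p)$ destroys exactly what Proposition~\ref{prop:amalgmated_Ueda} needs. That proposition requires the unitaries to be centered for the expectations onto the amalgam. But once $A\subset\tilde B$, the expectation $F$ onto $\tilde B$ fixes every $u_n^*u_m\in A$, so $F[u_n^*u_m]=u_n^*u_m\neq 0$.

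This is not a technicality. The subspaces $\mathcal{H}_{i,j}$ in that proof are bimodules over the amalgam, so for $u_n\in A\subset\tilde B$ the operator $u_nJ_\psi u_nJ_\psi$ maps each of them into itself. You then get no mutually orthogonal copies $\mathcal{K}^{(n)}_{i,j}$, and nothing forces $P_{i,j}(x_k)\to 0$. The unitaries must live in $M_1^\psi$ while being centered relative to the amalgam, never inside it.

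Independently, the step you flagged as the main obstacle does fail. An $E_1$-preserving expectation $M_1\to\tilde B$ restricts on the $p$-corner to a $\varphi_p$-preserving expectation onto the abelian algebra $A$, which forces $A\subset(pN_1p)^{\varphi_p}$. Since $\varphi_p$ is dictated by $E$, a Connes cocycle perturbation of a state changes nothing here. This centralizer can be trivial, e.g.\ $B=\mathbb{C}$, $p=1$, and $N_1=M_1$ a type $\mathrm{III}_1$ factor carrying an ergodic state $\varphi_1$.

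The missing idea is your fallback, actually carried out: keep the amalgam $B$ and construct a second $E^1$-invariant expectation onto $B$ itself. The paper does this in four steps.
\begin{enumerate}
\item Fix an $E$-invariant faithful normal state $\varphi$ and a faithful normal state $\psi_0$ on $pN_1p$ with diffuse centralizer (by Ueda's argument). Set $\psi(x)=\varphi(p)\,\psi_0(F_1E^1[pxp])+\varphi((1-p)x(1-p))$, with $F_1\colon M_1\to N_1$ as in the statement.
\item Because $p$ is minimal and central in $B$, one has $p\in M^\psi$ and $B=\mathbb{C}p\oplus B(1-p)$ is $\sigma^\psi$-invariant. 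This yields a $\psi$-preserving, $E^1$-invariant expectation $G\colon M\to B$.
\item On $pMp$ both expectations are scalar: $E[x]=\frac{\varphi(x)}{\varphi(p)}p$ and $G[x]=\frac{\psi(x)}{\psi(p)}p$.
\item Choose unitaries $u,v\in(pN_1p)^{\psi_0}\subset M_1^\psi$ that are Haar for $\varphi$ and for $\psi$ respectively. Then $u_n=u^n$ and $v_n=v^n$ satisfy the hypotheses of the proposition for the pair $(E,G)$.
\end{enumerate}

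Finally, your worry about invertibility is unfounded. For any projection $q\in pN_2p\setminus\{0,p\}$, the element $\varphi(p-q)q-\varphi(q)(p-q)$ lies in $\ker E$ and is invertible in $pM_2p$. The last clause of the proposition therefore places $x$ directly in $(pBp)^\omega=\mathbb{C}p$. There is no need to sum $y^*y$ over a family or to run a separate freeness argument inside $A^\omega$.
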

\begin{proof}
Fix any $E$-invariant faithful normal state $\vphi\in M_*$. Our aim is to apply Proposition~\ref{prop:amalgmated_Ueda}, but we will first construct a faithful normal state $\psi$ with suitable properties and then use modular theory techniques to produce a $\psi$-preserving conditional expectation $G\colon M\to B$. Toward this end, we fix a faithful normal state $\psi_0$ on $p N_1 p$ with diffuse centralizer $(p N_1 p)^{\psi_0}$, which exists by the argument in the proof of \cite[Theorem 3.4]{Ued11a}. Extend $\psi_0$ to a faithful normal state on $p Mp$ via $\psi_1:=\psi_0\circ F_1\circ E^1$, where, as usual, $E^1\colon M\to M_1$ denotes the unique conditional expectation satisfying $E_1\circ E^1 = E$. Then define a normal state $\psi$ on $M$ by
    \[
        \psi(x):= \varphi(p)\psi_1(p x p) + \varphi( (1-p)x (1-p)) \qquad x\in M.
    \]
To see that $\psi$ is faithful, observe that for positive $x\in M$ one has $\psi(x)=0$ if and only if $\psi_1(pxp)=0$ and $\varphi((1-p)x(1-p))=0$. The faithfulness of $\psi_0$ and $\varphi$ imply $pxp=0=(1-p)x(1-p)$, and so
    \[
        \varphi(x) = \varphi( xp + x(1-p)) = \varphi(pxp) + \varphi((1-p)x(1-p))=0,
    \]
where we have used that $p\in B'\cap B\subset B^\vphi \subset M^\varphi$. Hence the faithfulness of $\varphi$ implies $x=0$ and therefore $\psi$ is faithful. Next, we claim that $B$ is invariant under $\sigma^\psi$. Indeed, it follows from the definition of $\psi$ that $p\in M^\psi$ and that $\sigma^\psi|_{B(1-p)} = \sigma^{\vphi}|_{B (1-p)}$, thus $B=\C p \oplus B(1-p)$ is $\sigma^\psi$-invariant because both summands are invariant. This claim along with \cite[Theorem IX.4.2]{Tak03} yields a $\psi$-preserving faithful normal conditional expectation $G\colon M\to B$. Note that $G$ is $E^1$-invariant since $\psi$ is $E^1$-invariant (by virtue of $\psi_1$ and $\varphi$ each being $E^1$-invariant).

Now, recalling that $p N_1^\psi p=  (p N_1 p)^{\psi_0}$ is diffuse, we can argue as in \cite[Theorem 3.7]{Ued11a} to find unitaries $u,v\in p N_1^\psi p$ satisfying $\varphi(u^n)=0 = \psi(v^n)$ for all $n\in \Z\setminus\{0\}$. Using the $E$-invariance and $G$-invariance of $\vphi$ and $\psi$, respectively, it follows that
    \[
        E[x] = \frac{\varphi(x)}{\varphi(p)} p  \qquad \text{ and } \qquad G[x] = \frac{\psi(x)}{\psi(p)} p \qquad\qquad x\in p M p.
    \]
Thus the sequences $(u^n)_{n\in \N}, (v^n)_{n\in \N} \subset p N_1 ^\psi p \subset M_1^\psi$ satisfy the hypotheses of Proposition~\ref{prop:amalgmated_Ueda} with respect to $E$ and $G$. Additionally, the assumption $p N_2 p\neq \C p$ implies we can find a projection $q\in p N_2 p \setminus\{ 0, p\}$ so that
    \[
        y:=\varphi(p-q) q - \varphi(q)(p-q)
    \]
is invertible in $p M_2 p$ and satisfies
    \[
        E[y] = \frac{\varphi(y)}{\varphi(p)} p = 0.
    \]
Therefore the final statement in Proposition~\ref{prop:amalgmated_Ueda} yields
    \[
        [p(N_1 \vee N_2)p]' \cap (pMp)^\omega \subset \{y, u, v\}'\cap (pMp)^\omega \subset (pBp)^\omega = (\C p)^\omega = \C p,
    \]
as claimed.
\end{proof}

\begin{thm}[{Theorem~\ref{introthm:A}}]\label{thm:irreducibility_in_free_products}
Consider the free product
    \[
        (M,\vphi):=(M_1,\vphi_1) *(M_2,\vphi_2).
    \]
Let $N:=N_1\vee N_2$ for non-trivial subalgebras $N_i\leq M_i$ admitting $\vphi_i$-preserving faithful normal conditional expectations, 
$i=1,2$, and let $z\in N$ be the central support of the diffuse summand of $N$. Then the following are equivalent:
    \begin{enumerate}[label=(\roman*)]
        \item $\dim(N_1)+\dim(N_2)\geq 5$;
        \item $Nz$ is a factor;
        \item $(Nz)'\cap (z M z)^\omega = \C z$ for every free ultrafilter $\omega\in \beta\N\setminus \N$.
    \end{enumerate}
\end{thm}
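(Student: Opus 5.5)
We prove (i)$\Rightarrow$(iii)$\Rightarrow$(ii)$\Rightarrow$(i). Two of these steps are immediate. For (iii)$\Rightarrow$(ii), note that $Z(Nz)\subset (Nz)'\cap zMz\subset (Nz)'\cap(zMz)^\omega=\C z$. For (ii)$\Rightarrow$(i), we argue by contrapositive. If $\dim(N_1)+\dim(N_2)\le 4$, then $N_1=N_2=\C^2$, and \cite[Theorem 1.1]{Dyk93} shows that $N$ is either atomic (so $z=0$) or has diffuse part $L^\infty[0,1]\otimes M_2(\C)$ with non-trivial center. The degenerate case $z=0$ is handled by the convention that the zero algebra is not a factor. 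So the real content is (i)$\Rightarrow$(iii).

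Assume (i). By \cite[Theorem 4.1]{Ued11a}, $Nz$ is a factor. The strategy is to locate a projection $p\le z$ and rewrite $M$ as an amalgamated free product so that Theorem~\ref{thm:weak_spectral_amalgamated} applies at $p$. We then transfer the conclusion from the corner $pNp$ back to $Nz$ by an ultrapower version of Lemma~\ref{lem:irred_corners}. That transfer works verbatim: its proof only uses partial isometries $v_i\in Nz$ with $\sum v_iv_i^*=z$, and these commute appropriately with elements of $(Nz)'\cap(zMz)^\omega$. Since $Nz$ is a factor, the central support of any nonzero $p\le z$ in $N$ is $z$.

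Case 1: some $N_i$, say $N_1$, has a nonzero diffuse summand. Let $c\in Z(N_1)$ be its central support and set $B:=\C c\oplus\C(1-c)\le N_1$. Since $c\in N_1^{\vphi_1}$ is central, there is a $\vphi$-preserving expectation onto $B$. By Lemma~\ref{lem:amalgamated_osmosis} with $A=\C$, we have $M=M_1 *_B (M_2\vee B)$, and $N=N_1*_B(N_2\vee B)$ compatibly. Take $p=c$, which is minimal and central in $B$, so that $pN_1p$ is diffuse. The remaining hypothesis is $p(N_2\vee B)p\neq\C p$; this follows from freeness of $c$ and $N_2$ provided $N_2\neq\C$. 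Theorem~\ref{thm:weak_spectral_amalgamated} then gives $(pNp)'\cap(pMp)^\omega=\C p$, and we need $p\le z$. Here $pNp\ge pN_1p$ has no minimal projections, but we should also check that $pNp$ is diffuse inside $N$, i.e.\ that $p$ lies under the diffuse summand; this follows because any minimal projection of $N$ under $p$ would be minimal in $pNp$.

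Case 2: both $N_1$ and $N_2$ are atomic. This is the hardest case, and the idea is to manufacture diffuseness by regrouping. We first check that (i) forces $p N p$ to be diffuse for a suitable minimal projection $p$ of $N_1^{\vphi_1}$ (or of a matrix block), along the lines of Dykema's computation. We then choose $B\le N_1$ to be a finite-dimensional abelian subalgebra containing $p$ as a minimal central projection, with the expectation onto $B$ chosen $\vphi$-preserving. Regrouping via Lemma~\ref{lem:amalgamated_osmosis}, we write $M = M_1 *_B (M_2\vee B)$ and $N = N_1 *_B (N_2\vee B)$, arranged so that the roles are swapped: the component playing ``$N_1$'' in Theorem~\ref{thm:weak_spectral_amalgamated} is $N_2\vee B$, whose corner $p(N_2\vee B)p$ is diffuse. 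For $B=\C p\oplus\C(1-p)$, this corner is a free product of $N_2$ with a projection compressed by $p$, and it is diffuse under (i) by \cite{Dyk93}, \cite[Theorem 4.1]{Ued11a} applied to $N_2\vee B$. We also need $pN_1p\neq\C p$; if this fails because $p$ is a minimal projection of $N_1$, we instead take $B$ spanned by $p$ and its complement and exchange the roles of the indices. The main obstacle is choosing $p$ and $B$ so that both corner conditions hold simultaneously in all dimension configurations allowed by (i), for instance $\C^2*\C^3$ and $\C^2*M_2$. If this fails, our fallback is to exhaust the finitely many small configurations using Dykema's explicit descriptions, and to reduce the remaining ones to Case 1 via a larger diffuse intermediate subalgebra.
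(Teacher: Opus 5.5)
Your implications (iii)$\Rightarrow$(ii)$\Rightarrow$(i), the ultrapower version of Lemma~\ref{lem:irred_corners}, and your Case~1 are fine. Case~1, amalgamating over $\C c\oplus\C(1-c)$, handles the mixed diffuse/atomic situation more directly than the paper does. You do need to justify that $cNc$ is diffuse via the faithful normal conditional expectation onto the diffuse algebra $cN_1c$, since merely containing a diffuse subalgebra does not rule out minimal projections.

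The genuine gap is Case~2, which is exactly the case the paper singles out as hard, and your sketch there rests on a false claim. Take $B\le N_1$ abelian and $p$ minimal in $B$. Whether $p(N_2\vee B)p\cong p(N_2*B)p$ is diffuse depends on masses, not on (i). Indeed, if $q\in N_2$ is central with $N_2q=\C q$ and $\vphi(p)+\vphi(q)>1$, then $p\wedge q$ is an atom of $N_2*B$ of mass $\vphi(p)+\vphi(q)-1$. Concretely, let $N_1=\C^3$ with weights $(0.06,0.06,0.88)$ and $N_2=\C^2$ with weights $(0.9,0.1)$. Every $p\in N_1$ with $pN_1p\neq\C p$ has $\vphi(p)\ge 0.12$, so $p(N_2\vee B)p$ always contains an atom. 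Exchanging the indices also fails: $pN_2p=\C p$ for every projection $0\neq p\neq 1$ in $N_2=\C^2$, while $p=1$ would need $N_1$ diffuse. So no choice of $p$ and $B\le N_1$ meets both hypotheses of Theorem~\ref{thm:weak_spectral_amalgamated}. Your fallback of checking ``finitely many small configurations'' is not available either, since the weights vary continuously and the $N_i$ may be infinite dimensional.

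The missing idea is that the projection at which you apply Theorem~\ref{thm:weak_spectral_amalgamated} need not lie in $N_1$, so the amalgam must be enlarged beyond $N_1$. The paper assumes $\dim N_1\ge 3$ and proceeds as follows.
\begin{enumerate}
\item Take $p\in N_1^{\vphi_1}\setminus\{0,1\}$ and $A=\C p+\C(1-p)$.
\item Let $q$ be the central support of the diffuse summand of $N_2^{\vphi_2}\vee A\cong N_2^{\vphi_2}*A$; Dykema gives $pq\neq0\neq(1-p)q$.
\item Amalgamate over $B=\C e+\C(p-e)+\C f+\C(1-p-f)$, where $e=pq$ and $0\neq f<(1-p)q$ are chosen in $N_2^{\vphi_2}\vee A\subset M^\vphi$. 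Then $M=(M_1\vee B)*_B(M_2\vee A)$.
\item The corner $e(N_2\vee A)e$ is diffuse because it expects onto $e(N_2^{\vphi_2}\vee A)e$.
\item The nontrivial corner is now $e(N_1\vee B)e$, not a corner of $N_1$. Using freeness of $N_1$ and $B$ over $A$, take $x=eae$ with $0\neq a\in pN_1p\cap\ker\vphi$ when $p$ is not minimal in $N_1$. If $p$ and $1-p$ are both minimal in $N_1$, take $x=ev^*(f-E[f])ve$ with $v\in(1-p)N_1p$ a nonzero partial isometry, which exists precisely because $\dim N_1\ge 3$.
\item Computing $E[x^*x]\neq 0=E[x]$ shows $x\notin\C e$.
\end{enumerate}
This is the step your Case~2 lacks. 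Once it is in place, one gets $e\le z$, and Lemma~\ref{lem:irred_corners} finishes as in your outline.
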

\begin{proof}
The contrapositive of (ii)$\Rightarrow$(i) follows from \cite[Theorem 1.1]{Dyk93}, and (iii)$\Rightarrow$(ii) is immediate. So we need only show (i)$\Rightarrow$(iii). Given $\dim(N_1)+\dim(N_2)\geq 5$, we may assume without loss of generality that $\dim(N_1)\geq 3$. First, if either $N_1$ or $N_2$ is diffuse, then $z=1$ and (iii) follows from Theorem~\ref{thm:weak_spectral_amalgamated} applied to $p=1$ that $N' \cap M^\omega =\C$. So we will from now on assume that both $N_1$ and $N_2$ have a non-trivial atomic summand, which we note implies $N_1^{\vphi_1} \neq \C \neq N_2^{\vphi_2}$ by Lemma~\ref{lem:ergodic_states}.

Now, fix a non-trivial projection $p\in N_1^{\vphi_1}\setminus \{0,1\}$ and set $A:=\C p + \C (1-p)$. Observe that
    \[
        E[x]:=\frac{\vphi(xp)}{\vphi(p)} p + \frac{\vphi(x(1-p))}{\vphi(1-p)}(1-p) \qquad x\in M
    \]
gives the unique $\vphi$-preserving conditional expectation $E\colon M\to A$. Then by Lemma~\ref{lem:amalgamated_osmosis} one has
    \begin{align*}
        (M,E)&\cong (M_1, E|_{M_1}) \underset{A}{*} (\widetilde{M}_2, E|_{\widetilde{M}_2})\\
        (N, E|_{N})&\cong (N_1, E|_{N_1}) \underset{A}{*} (\widetilde{N}_2, E|_{\widetilde{N}_2}),
    \end{align*}
where $\widetilde{M}_2:=M_2\vee A$ and $\widetilde{N}_2:= N_2 \vee A$. Note that
    \[
        (\widetilde{N}_2, \vphi|_{\widetilde{N}_2}) \cong (N_2,\vphi_2) * (A,\vphi_1).
    \]
Let $q$ be the central support of the diffuse summand of $N_2^{\vphi_2}\vee A$. Since $N_2^{\vphi_2}$ and $A$ are each at least $2$-dimensional, $pq$ and $(1-p)q$ are both non-zero by \cite[Theorem 1.1]{Dyk93}. Set $e:=pq$ and, using the diffuseness of $(1-p)q (N_2^{\vphi_2} \vee A)(1-p)q$, we can choose $f\in N_2^{\vphi_2} \vee A$ to be a non-zero projection that is strictly smaller than $(1-p)q$. Noting $N_2^{\vphi_2}\vee A \subset M^{\vphi}$, it follows that $B:=\C e + \C(p-e) + \C f + \C (1-p -f)$ admits a unique $\vphi$-preserving conditional expectation $F\colon M\to B$.  
Applying Lemma~\ref{lem:amalgamated_osmosis} again, we have
    \begin{align*}
        (M,F)&\cong (\widetilde{M}_1, F|_{\widetilde{M}_1}) \underset{B}{*} (\widetilde{M}_2, F|_{\widetilde{M}_2})\\
        (N, F|_{N})&\cong (\widetilde{N}_1, F|_{\widetilde{N}_1}) \underset{B}{*} (\widetilde{N}_2, F|_{\widetilde{N}_2}),
    \end{align*}
where $\widetilde{M}_1:= M_1\vee B$ and $\widetilde{N}_1:= N_1 \vee B$. Now, observe that $e$ is minimal and central in $B$ and $e \widetilde{N}_2 e$ is diffuse since it expects onto $e (N_2^{\vphi_2}\vee A )e$ (see \cite[Lemma 1.4]{HJKEN24}). We will argue below that $e \widetilde{N}_1 e \neq \C e$, in which case Theorem~\ref{thm:weak_spectral_amalgamated} will imply $e N e \leq (e M e)^\omega $ is irreducible. We further claim that $z$ is the central support of $e$ in $N$, so that Lemma~\ref{lem:irred_corners} then completes the proof of the present theorem. Indeed, $e N e$ is diffuse since it expects onto $e\widetilde{N}_2 e$ and $N(1-z)$ is finite dimensional by \cite[Theorem 4.1]{Ued11a}, so the common corner $ e N e (1-z)$ must therefore be zero. That is, $e(1-z)=0$ and hence $e\leq z$. But then $z$ is necessarily the central support of $e$ in $N$ because $Nz$ is a factor by \cite[Theorem 4.1]{Ued11a}.

It remains to show that $e \widetilde{N}_1 e \neq \C e$. Observe that $E[e]=\frac{\vphi(e)}{\vphi(p)} p \neq 0$, and so it suffices to find a non-zero $x\in e \widetilde{N}_1 e$ with $E[x]=0$. We first consider when one of $p$ or $1-p$ is non-minimal in $N_1$. Reversing the roles of $p$ and $1-p$ if necessary, we may assume without loss of generality that $p$ is non-minimal, in which case we can find a non-zero element $a\in p N_1 p$ satisfying $\vphi(a)=0$ (and hence $E[a]=0$). Then $x:= e a e \in  e\widetilde{N}_1 e$ satisfies $E[x]=0$ by the freeness of $N_1$ and $B$ with amalgamation over $A$. It is also non-zero since, using freeness with amalgamation again, one has
    \[
        E[x^* x] = E[e a^* e a e] = \frac{\vphi(e)}{\vphi(p)} E[e a^* a e]= \frac{\vphi(e)\vphi(a^*a)}{\vphi(p)^2} E[e] = \frac{\vphi(e)^2 \vphi(a^*a)}{\vphi(p)^3} p \neq 0.
    \]
Next suppose $p$ and $1-p$ are both minimal in $N_1$. Since $\dim(N_1)\geq 3$, we must have $p N_1 (1-p)\neq 0$ lest $N_1\cong \C^2$. Thus we can find a partial isometry $v\in N_1$ with $v^*v \leq p$ and $vv^*\leq 1-p$. Recalling that we choose $f$ to be strictly smaller than $(1-p)q$ (and hence strictly smaller than $1-p$), we have that
    \[
        y:= f - E[f] = f - \frac{\vphi(f)}{\vphi(1-p)}(1-p)
    \]
is non-zero but satisfies $E[y]=0$. Then $x:=e v^* y v e \in e \widetilde{N}_1 e$ satisfies $E[x]=0$ by freeness with amalgamation since $E[v] =0$ by virtue of $v=(1-p)vp$. On the other hand, one has
    \begin{align*}
        E[x^*x]&=E[ev^* y^* v e v^* yve] = \frac{\vphi(e)}{\vphi(p)} E[e v^* y^* vv^* y ve] = \frac{\vphi(e)\vphi(vv^*)}{\vphi(p)\vphi(1-p)} E[e v^* y^*y v e]\\
            &= \frac{\vphi(e)\vphi(vv^*)\vphi(y^*y)}{\vphi(p)\vphi(1-p)^2}E[e v^* v e]=\frac{\vphi(e)\vphi(vv^*)\vphi(y^*y)\vphi(v^*v)}{\vphi(p)^2\vphi(1-p)^2}E[e] = \frac{\vphi(e)^2\vphi(vv^*)\vphi(y^*y)\vphi(v^*v)}{\vphi(p)^3\vphi(1-p)^2} p \neq 0,
    \end{align*}
so that $x$ is non-zero.
\end{proof}

\section{Centralizers of free product states}

Given a statial von Neumann algebra $(M,\vphi)$, recall that $M^{(\vphi,\text{ap})}=M$ if and only if $\vphi$ is almost periodic. On the other extreme, $M^{(\vphi,\text{ap})}=\C$ if and only if $\vphi$ is \emph{ergodic} in the sense that $M^\vphi=\C$; more generally, one has the following:

\begin{ex}\label{ex:atomic_abelian_centralizer}
For a statial von Neumann algebra $(M,\varphi)$, if one has
    \[
        (M^\vphi,\vphi) \cong \overset{p_1}{\underset{\vphi(p_1)}{\C}}\oplus \cdots \oplus \overset{p_d}{\underset{\vphi(p_d)}{\C}},
    \]
then there is a state-preserving embedding
    \[
        (M^{(\varphi,\text{ap})},\vphi) \hookrightarrow \overset{p_1,\ldots, p_d}{\underset{\vphi(p_1),\ldots, \vphi(p_d)}{M_d(\C)}}.
    \]
Indeed, given $x \in M^{(\varphi,\lambda)}$ and $1\leq i \leq d$, let $y = p_ixp_i$. Suppose $y$ is non-zero. Then $y^*y\in p_i M^{\varphi} p_i = \C p_i$ so that $y^*y = rp_i$ for some $r >0$. Set $z := \frac{1}{\sqrt{r}}y$ and observe that $zz^*z = zp_i = z$; that is, $z$ is a partial isometry. Since $z\in  p_i M^{(\vphi,\lambda)} p_i$, one has $z^*z, zz^*\in p_i M^{\vphi} p_i = \C p_i$ and therefore $z^*z = zz^* = p_i$. But then
    \[
        \vphi(p_i) = \vphi(zz^*) = \lambda \vphi(z^*z) =\lambda \varphi(p_i)
    \]
implies $\lambda =1$. Therefore $p_i M^{(\vphi, \eig)} p_i = p_i M^\vphi p_i = \C p_i$, and so each $p_i$ is a minimal projection in $M^{(\vphi,\text{ap})}$. Since $\sum_{i=1}^d p_i = 1$, the result follows.
$\hfill\blacksquare$
\end{ex}

\begin{prop}\label{prop:almost_periodic_part_free_product}
For a free product
    \[
        (M,\vphi) = (M_1,\vphi_1)* (M_2,\vphi_2),
    \]
one has
    \[
        (M^{(\vphi,\text{ap})},\vphi) \cong ( M_1^{(\vphi_1, \text{ap})},\vphi_1)* (M_2^{(\vphi_2,\text{ap})}, \vphi_2).
    \]
\end{prop}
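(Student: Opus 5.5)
Set $P_i:=M_i^{(\vphi_i,\text{ap})}$ and $P:=P_1\vee P_2\leq M$. The plan is to show $P=M^{(\vphi,\text{ap})}$. Once this is done, the isomorphism $(P,\vphi|_P)\cong (P_1,\vphi_1)*(P_2,\vphi_2)$ is automatic: each $P_i$ is the range of a $\vphi_i$-preserving faithful normal conditional expectation by \cite[Proposition 4.1]{GGLN25}, so $P_1$ and $P_2$ are free in $(M,\vphi)$ and generate $P$.

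\textbf{The inclusion $P\subseteq M^{(\vphi,\text{ap})}$.} Since $\vphi|_{M_i}=\vphi_i$ and $M_i$ is the range of a $\vphi$-preserving conditional expectation, Takesaki's theorem gives $\sigma_t^\vphi|_{M_i}=\sigma_t^{\vphi_i}$. Hence every eigenoperator of $\sigma^{\vphi_i}$ is an eigenoperator of $\sigma^\vphi$ with the same eigenvalue. Therefore $M_i^{(\vphi_i,\eig)}\subset M^{(\vphi,\eig)}$, and taking generated von Neumann algebras gives $P\subseteq M^{(\vphi,\text{ap})}$.

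\textbf{The reverse inclusion.} I would argue spectrally on $L^2(M,\vphi)$ and show that every eigenvector of $\Delta_\vphi$ lies in $L^2(P,\vphi)$. Recall the free product decomposition
\[
L^2(M,\vphi)=\C\xi_\vphi\oplus\bigoplus_{n\geq1}\bigoplus_{i_1\neq\cdots\neq i_n}\mr{H}_{i_1}\otimes\cdots\otimes\mr{H}_{i_n},
\]
where $\mr{H}_i=L^2(M_i,\vphi_i)\ominus\C\xi_{\vphi_i}$. On each tensor summand, $\Delta_\vphi$ acts as $\Delta_{\vphi_{i_1}}\otimes\cdots\otimes\Delta_{\vphi_{i_n}}$; this is the standard formula $\sigma_t^\vphi=\sigma_t^{\vphi_1}*\sigma_t^{\vphi_2}$.

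Decompose each $\mr{H}_i=\mr{H}_i^{\mathrm{pp}}\oplus\mr{H}_i^{\mathrm{c}}$ into the closed span of the eigenvectors of $\Delta_{\vphi_i}$ and its continuous-spectrum complement. The key fact is that a tensor product of positive self-adjoint operators has point spectrum only on the tensor product of the point-spectral subspaces. Equivalently, the unitary group $\Delta^{it}$ has an eigenvector in $H\otimes K$ only within $H^{\mathrm{pp}}\otimes K^{\mathrm{pp}}$. This holds because a tensor product with a weakly mixing factor is weakly mixing: a representation with no finite-dimensional subrepresentations, tensored with anything, still has none. Consequently the point-spectral subspace of $\Delta_\vphi$ is
\[
\C\xi_\vphi\oplus\bigoplus\mr{H}_{i_1}^{\mathrm{pp}}\otimes\cdots\otimes\mr{H}_{i_n}^{\mathrm{pp}}.
\]

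It remains to identify $\C\xi_{\vphi_i}\oplus\mr{H}_i^{\mathrm{pp}}$ with $L^2(P_i,\vphi_i)$. This uses \cite[Lemma 1.4]{GGLN25} and \cite[Proposition 4.1]{GGLN25}: the closure of $M_i^{(\vphi_i,\eig)}\xi$ is exactly the point-spectral subspace of $\Delta_{\vphi_i}$. Indeed, if $\Delta\xi=\lambda\xi$, then applying the $\sigma^{\vphi_i}$-invariant expectation onto $P_i$, or an averaging argument, shows $\xi\in L^2(P_i)$. Since the point-spectral subspace above is exactly $L^2(P_1*P_2)=L^2(P,\vphi)$, every eigenvector of $\Delta_\vphi$ lies in $L^2(P,\vphi)$.

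\textbf{Conclusion.} For $x\in M^{(\vphi,\lambda)}$, the vector $x\xi_\vphi$ is an eigenvector of $\Delta_\vphi$, so $x\xi_\vphi\in L^2(P)$. Let $E_P$ be the $\vphi$-preserving expectation onto $P$, which exists since $P$ is $\sigma^\vphi$-invariant. Then $(x-E_P[x])\xi_\vphi=0$, so $x=E_P[x]\in P$ because $\xi_\vphi$ is separating. Hence $M^{(\vphi,\eig)}\subset P$, and the reverse inclusion follows.

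\textbf{Main obstacle.} The step I expect to require the most care is the point-spectrum statement for tensor products together with the identification of $\mr{H}_i^{\mathrm{pp}}$ with $L^2(P_i)\ominus\C$. I would handle the first by the weak-mixing characterization, using that $\Delta^{it}$ on $H^{\mathrm{c}}$ has no finite-dimensional invariant subspaces, hence neither does its tensor product with any unitary representation.
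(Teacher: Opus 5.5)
Your proposal is correct and follows essentially the same route as the paper. For $P_1\vee P_2\subseteq M^{(\vphi,\text{ap})}$ you both use $\sigma^\vphi|_{M_i}=\sigma^{\vphi_i}$, and for the reverse inclusion you both decompose $L^2(M,\vphi)$ into alternating tensor products on which $\Delta_\vphi^{it}$ acts as $\Delta_{\vphi_{i_1}}^{it}\otimes\cdots\otimes\Delta_{\vphi_{i_d}}^{it}$, using that a tensor product with a weakly mixing factor is weakly mixing. You also make explicit two steps the paper leaves implicit: identifying the point-spectral part of $L^2(M_i,\vphi_i)$ with $L^2(P_i,\vphi_i)$, where your averaging argument (not the expectation onto $P_i$, which would be circular) is what does the work; and passing from eigenvectors back to eigenoperators via the $\vphi$-preserving expectation onto $P$ and the separating vector $\xi_\vphi$.
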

\begin{proof}
Since the free product of almost periodic states is almost periodic, we have $M_1^{(\vphi_1, \text{ap})} \vee M_2^{(\vphi_2,\text{ap})} \leq M^{(\vphi,\text{ap})}$. For the reverse inclusion, it suffices to show that the strongly continuous representation $\R\ni t \mapsto \Delta_\vphi^{it}$ has no eigenvectors (i.e. is weakly mixing) in
    \[
        L^2(M,\vphi)\ominus L^2(M_1^{(\vphi_1, \text{ap})} \vee M_2^{(\vphi_2,\text{ap})}, \vphi).
    \]
Decomposing $L^2(M,\vphi)$ in the usual way, we see that the above orthogonal complement is given by
    \[
        \bigoplus_{d=1}^\infty \bigoplus_{i_1\neq \cdots \neq i_d} K_{i_1} \otimes \cdots \otimes K_{i_d},
    \]
where each $K_{i_j}$ is either $L^2(M_{i_j}^{(\vphi_{i_j},\text{ap})},\vphi_{i_j})$ or its orthogonal complement in $L^2(M_{i_j},\vphi_{i_j})$ with the latter occurring for at least one of $i_1,\ldots, i_d$. Since $\Delta_\vphi^{it}$ restricts to
    \[
        \Delta_{\vphi_{i_1}}^{it}\otimes \cdots \otimes \Delta_{\vphi_{i_d}}^{it}
    \]
on $K_{i_1} \otimes \cdots \otimes K_{i_d}$, and since $\Delta_{\vphi_{i_j}}^{it}$ is weakly mixing on $L^2(M_{i_j},\vphi_{i_j})\ominus L^2(M_{i_j}^{(\vphi_{i_j},\text{ap})},\vphi_{i_j})$, it follows that $\Delta_\vphi^{it}$ is weakly mixing on the claimed subspace.
\end{proof}

\begin{thm}[{Theorem~\ref{introthm:B}}]\label{thm:full_centralizer}
Consider the free product
    \[
        (M,\vphi):=(M_1,\vphi_1) *(M_2,\vphi_2),
    \]
where $\vphi_1$ and $\vphi_2$ are non-ergodic states. Let $z\in M^\vphi$ be the central support of the diffuse summand of $M^\vphi$. Then the following are equivalent:
    \begin{enumerate}[label=(\roman*)]
        \item $\dim(M_1^{(\vphi_1,\text{ap})})+\dim(M_2^{(\vphi_2,\text{ap})}) \geq 5$;
        \item $M^\vphi z$ is a factor;
        \item $(M^\vphi z)'\cap (z M z)^\omega= \C z$ for every free ultrafilter $\omega\in \beta\N\setminus \N$.
    \end{enumerate}
\end{thm}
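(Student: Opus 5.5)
The plan is to reduce to the almost periodic part and then reuse the amalgamated free product machinery from the proof of Theorem~\ref{thm:irreducibility_in_free_products}. Write $P_i:=M_i^{(\vphi_i,\text{ap})}$ and $P:=M^{(\vphi,\text{ap})}$. By Proposition~\ref{prop:almost_periodic_part_free_product}, $(P,\vphi)\cong(P_1,\vphi_1)*(P_2,\vphi_2)$. Each $P_i$ is non-trivial because $\vphi_i$ is non-ergodic, and $P_i$ admits a $\vphi_i$-preserving conditional expectation by \cite[Proposition 4.1]{GGLN25}. Moreover $M^\vphi=P^\vphi$, since centralizer elements are eigenoperators with eigenvalue $1$.

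For (ii)$\Rightarrow$(i), argue by contrapositive. If $\dim P_1+\dim P_2\le 4$, then $P$ is one of Dykema's small free products \cite[Theorem 1.1]{Dyk93}, for instance $\C^2*\C^2$. Its diffuse summand is then non-factorial, e.g.\ $L^\infty[0,1]\otimes M_2$. The state is almost periodic, so Remark~\ref{rem:almost_periodic_diffuse_and_atomic} and \cite[Lemma 2.1]{Ued11b} show that $z$ is also the diffuse central support of $P$. One then checks that the centre of $Pz$ lies in $P^\vphi z=M^\vphi z$ and remains central there, so $M^\vphi z$ is not a factor. The implication (iii)$\Rightarrow$(ii) is immediate.

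For (i)$\Rightarrow$(iii), first handle the case where some $P_i^{\vphi_i}$ has a diffuse corner $pP_1^{\vphi_1}p$ with $p$ minimal central. Then mimic the earlier argument with $N_i:=M_i^{\vphi_i}$. The point is that $M^\vphi\supseteq M_1^{\vphi_1}\vee M_2^{\vphi_2}$ contains the unitaries $u,v$ needed in Proposition~\ref{prop:amalgmated_Ueda}. In general, though, $M^\vphi$ is strictly larger (it contains words like $x y$ with $x\in M_1^{(\vphi_1,\lambda)}$ and $y\in M_2^{(\vphi_2,1/\lambda)}$). So I would instead run the proof of Theorem~\ref{thm:irreducibility_in_free_products} directly:

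\begin{enumerate}[label=(\arabic*)]
\item Choose a non-trivial projection $p\in P_1^{\vphi_1}$; this exists because $P_1$ is non-trivial and almost periodic, so $P_1^{\vphi_1}\neq\C$.
\item Amalgamate over $A=\C p+\C(1-p)$ and then over the four-dimensional $B\subset M^\vphi$ built from $N_2^{\vphi_2}\vee A$, as in the earlier proof, with $N_2$ replaced by $P_2$.
\item Obtain a minimal central projection $e\in B$ such that $e(P_2^{\vphi_2}\vee A)e$ is diffuse. Note that $P_2^{\vphi_2}$ has dimension at least $2$, because $P_2\neq\C$ is almost periodic.
\item Apply Theorem~\ref{thm:weak_spectral_amalgamated} with $N_2:=(P_2^{\vphi_2}\vee B)$, a diffuse corner contained in $M^\vphi$, and $N_1:=M_1^{\vphi_1}\vee B$, where one needs $eN_1e\neq\C e$.
\end{enumerate}

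Step (4) gives $(eM^\vphi e)'\cap(eMe)^\omega\subseteq(e(N_1\vee N_2)e)'\cap(eMe)^\omega=\C e$. Finally, Lemma~\ref{lem:irred_corners}, adapted to ultrapowers in the same way, transfers this to $z$. That requires showing $e\le z$ and that $z$ is the central support of $e$ in $M^\vphi$. For this I would use factoriality of $M^\vphi z$ via \cite[Theorem 2.1]{Ued11b} applied to $P$ together with \cite[Theorem 4.1]{Ued11a}, plus finite-dimensionality of $M^\vphi(1-z)$.

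The main obstacle is arranging $eN_1e\neq\C e$ with $N_1\subseteq M^\vphi$, using only $\dim P_1+\dim P_2\ge5$ rather than any dimension bound on the centralizers. For example, $P_1$ could be $M_2(\C)$ with a non-tracial state, whose centralizer is just $\C^2$. In that case I would take the partial isometry $v\in P_1^{(\vphi_1,\lambda)}$ and pair it with a $\lambda^{-1}$-eigenoperator $w$ from the $M_2$ side, built inside $(1-p)q$ using the diffuse centralizer. The product $e v^*wv e$ then lies in $M^\vphi$, and it is centred and non-zero by the same $E$-computation as before. The case analysis over which of $P_1,P_2$ carries the extra dimension is where most of the work will go.
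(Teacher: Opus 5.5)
Your arguments for (ii)$\Rightarrow$(i), (iii)$\Rightarrow$(ii), and the final transfer from $e$ to $z$ are fine. In (ii)$\Rightarrow$(i), non-ergodicity forces $P_1=P_2=\C^2$, so $\C^2*\C^2$ is the only case. Your steps (1)--(4) with $N_1=M_1^{\vphi_1}\vee B$ just reprove Theorem~\ref{thm:irreducibility_in_free_products} for $N_i=M_i^{\vphi_i}$, which settles the case $\dim(M_1^{\vphi_1})+\dim(M_2^{\vphi_2})\geq 5$.

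The gap is the remaining case $\dim(M_1^{\vphi_1})=\dim(M_2^{\vphi_2})=2$ with $\dim(P_1)\geq 3$, which is the only genuinely new case. There $M_1^{\vphi_1}=A\subset B$, so $N_1=M_1^{\vphi_1}\vee B=B$ and $eN_1e=\C e$. No choice of element can rescue step (4) unless you change $N_1$. Your proposed element also fails. If $v\in M^{(\vphi,\lambda)}$ and $w\in M^{(\vphi,\lambda^{-1})}$, then $ev^*wve$ is either $0$ or an eigenoperator with eigenvalue $\lambda^{-1}\neq 1$, so it is not a non-zero element of $M^\vphi$. Moreover, if $w$ comes from the $M_2$ side rather than from $B$, then $ev^*wve$ does not lie in the first leg $M_1\vee B$ of the decomposition over $B$, so Theorem~\ref{thm:weak_spectral_amalgamated} cannot use it anyway.

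The missing observation is that no pairing is needed. Since $v^*$ and $v$ already carry reciprocal eigenvalues, $v^*(\cdot)v$ maps $M^\vphi$ into $M^\vphi$. Take $w:=f-E[f]\in B\subset M^\vphi$, which has eigenvalue $1$ and is presumably the element you meant by "built inside $(1-p)q$ using the diffuse centralizer". Then enlarge $N_1$ to $Q_1:=(P_1\vee B)^\vphi$. This algebra contains $B$, lies in $M^\vphi$, and is the range of a $\vphi$-preserving (hence $F$-preserving) expectation from $M_1\vee B$. Now $x:=ev^*(f-E[f])ve\in eQ_1e$, and the $E$-computation from Theorem~\ref{thm:irreducibility_in_free_products} gives $x\notin\C e$. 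Applying Theorem~\ref{thm:weak_spectral_amalgamated} to $Q_1$ and $Q_2:=M_1^{\vphi_1}\vee M_2^{\vphi_2}$ then yields $(eM^\vphi e)'\cap(eMe)^\omega=\C e$; this is the paper's argument.

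The required $v\in P_1^{(\vphi_1,\vphi(1-p)/\vphi(p))}$ with $v^*v=p$ and $vv^*=1-p$ exists by Example~\ref{ex:atomic_abelian_centralizer}. That example embeds $P_1$ into $M_2(\C)$, so $\dim(P_1)\geq 3$ forces $P_1\cong M_2(\C)$. Your $M_2(\C)$ example is therefore the only case, and the "case analysis over which of $P_1,P_2$ carries the extra dimension" is just a relabeling of the indices.
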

\begin{proof}
Since $\vphi_1, \vphi_2$ are non-ergodic one has $\dim(M_i^{(\vphi_i,\text{ap})}) \geq \dim(M_i^{\vphi_i})\geq 2$ for each $i=1,2$. Thus if (i) fails then $M_i^{(\vphi_i,\text{ap})}=M_i^{\vphi_i}\cong \C^2$ for each $i=1,2$, in which case Proposition~\ref{prop:almost_periodic_part_free_product} implies
    \[
        M^\vphi \leq M^{(\vphi,\text{ap})} = M_1^{\vphi_1} * M_2^{\vphi_2} \leq M^\vphi.
    \]
Thus $M^\vphi = M_1^{\vphi_1} * M_2^{\vphi_2} = \C^2 * \C^2$, which is never a factor by \cite[Theorem 1.1]{Dyk93}, and so we have shown (ii)$\Rightarrow$(i). The implication (iii)$\Rightarrow$(ii) is immediate, and so we are left with showing (i)$\Rightarrow$(iii).

From Remark~\ref{rem:almost_periodic_diffuse_and_atomic}, we see that $z$ is also the central support of the diffuse summand of $M^{(\vphi, \text{ap})}$, and Proposition~\ref{prop:almost_periodic_part_free_product} along with \cite[Theorem 2.1]{Ued11b} imply $M^{\vphi} z$ is a factor. Thus, in order to establish (iii), it suffices by Lemma~\ref{lem:irred_corners} to show $(e M^\vphi e)' \cap (e M e)^\omega = \C e$ for any non-zero projection $e \in M^\vphi z$.

If the stronger condition $\dim(M_1^{\vphi_1})+\dim(M_2^{\vphi_2}) \geq 5$ holds, then letting $e$ be the central support of the diffuse summand of $M_1^{\vphi_1} * M_2^{\vphi_2}$ we necessarily have $e\leq z$, and Theorem~\ref{thm:irreducibility_in_free_products} gives
    \[
        (e M^\vphi e)'\cap (e M e)^\omega \subset ( (M_1^{\vphi_1} * M_2^{\vphi_2}) e)' \cap (e M e)^\omega = \C e.
    \]
So, it remains to consider the case when $\dim(M_1^{\vphi_1})=\dim(M_2^{\vphi_2}) =2$, but $\dim(M_i^{(\vphi_i,\text{ap})}) \geq 3$ for at least one of $i=1,2$. We shall assume, without loss of generality, that the almost periodic part of $(M_1,\vphi_1)$ is at least three dimensional. We argue precisely as in the proof of Theorem~\ref{thm:irreducibility_in_free_products} with $N_i:= M_i^{(\vphi_i, \text{ap})}$ and $A:= M_1^{\vphi_1}= \C p + \C (1-p)$, to obtain
    \[
        (M,F)\cong (M_1\vee B, F|_{M_1 \vee B}) \underset{B}{*} (M_2 \vee A, F|_{M_2\vee A})
    \]
for $B=\C e + \C (p-e) + \C f + \C(1-p-f)$ with $e,f\in M^\vphi$ and $F\colon M\to B$ the unique $\varphi$-preserving conditional expectation. Additionally, we have $e(N_2^{\vphi_2}\vee A)e = e( M_2^{\vphi_2} \vee M_1^{\vphi_1})e$ is diffuse. We wish to apply Theorem~\ref{thm:weak_spectral_amalgamated} to $Q_1:= (N_1 \vee B)^\vphi$ and $Q_2:= M_1^{\vphi_1} \vee M_2^{\vphi_2}$ and the projection $e\in B$, and so it remains to show $e Q_1 e \neq \C e$. Our assumptions on the dimensions of the centralizer and almost periodic part of $(M_1, \vphi_1)$ imply that there exists a partial isometry $v\in M_1^{(\vphi_1, \vphi(1-p)/\vphi(p) )}$ satisfying $v^*v=p$ and $vv^* = 1-p$ (see Example~\ref{ex:atomic_abelian_centralizer}). Setting $x:= e v^*(f - E[f]) v e$ (where $E\colon M\to A = M_1^{\vphi_1}$ is the unique $\vphi$-preserving conditional expectation), one sees that $x\not\in \C e$ by the same argument as in the proof of Theorem~\ref{thm:irreducibility_in_free_products}. Observing that $x\in e Q_1 e$, we can therefore apply Theorem~\ref{thm:weak_spectral_amalgamated} to conclude
    \[
        (e M^\vphi e)' \cap (e M e)^\omega \subset (e[ Q_1 \vee Q_2]e)' \cap (e Me)^\omega = \C e,
    \]
where the first inclusion follows from $Q_1, Q_2\leq M^\vphi$. We must also have $e\leq z$ since $ e Q_2 e \leq e M^\vphi e$ is diffuse, and so, as noted above, this concludes the proof.
\end{proof}

\begin{rem}\label{rem:diffuse_free_product_centralizer}
In the context of Theorem~\ref{thm:full_centralizer}, $z$ is also the central support of the diffuse summand of $M^{(\vphi,\text{ap})}$ by Remark~\ref{rem:almost_periodic_diffuse_and_atomic}. Thus, using Proposition~\ref{prop:almost_periodic_part_free_product} and \cite[Theorem 4.1]{Ued11a}, one can compute the finite dimensional corner $M^{(\vphi,\text{ap})}(1-z)$, along with the restriction of $\vphi$, which determine $M^\vphi(1-z)$. A necessary condition for $1-z$ to be non-zero is that each of $M_1^{(\vphi_1,\text{ap})}$ and $M_2^{(\vphi_2,\text{ap})}$ admit finite dimensional summands, which will correspond to finite dimensional summands of $M_1^{\vphi_1}$ and $M_2^{\vphi_2}$ by Remark~\ref{rem:almost_periodic_diffuse_and_atomic}. By \cite[Remark 4.2]{Ued11a}, one has $z=1$ when neither $M_1^{(\vphi_1,\text{ap})}$ nor $M_2^{(\vphi_2,\text{ap})}$ has a non-trivial, minimal, central projection with mass strictly greater than $\frac12$. Since such central projections are contained in the centralizer, it follows that a sufficient condition for $z=1$ is that neither $M_1^{\vphi_1}$ nor $M_2^{\vphi_2}$ has a non-trivial, minimal, central projection with mass strictly greater than $\frac12$.$\hfill\blacksquare$
\end{rem}

\section{Graph product von Neumann subalgebras}

In this section, we will apply Theorems~\ref{introthm:A} and \ref{introthm:B} to graph products
    \[
        (M,\vphi) = \gp_{v\in \mathcal{V}} (M_v,\vphi_v)
    \]
of families of statial von Neumann algebras over finite simple graphs $\mathcal{G}=(\mathcal{V}, \mathcal{E})$. For $v\in \mathcal{V}$ we denote $S(v):=\{w\in \mathcal{V}\colon v\sim w\}$ and $B(v):=S(v)\cup \{v\}$. 

Following the convention of \cite{CdSHJKEN25}, we always assume that $M_v^{\vphi_v}\cap \ker{\vphi_v}$ contains a unitary for each $v\in \mathcal{V}$. This assumption typically ensures $M$ has no atomic summand, which allowed for complete characterizations of factoriality and fullness of $M$ in \cite{CdSHJKEN25}, and it will likewise be helpful for our purposes. The following observation will be needed in our applications of Theorems~\ref{introthm:A} and \ref{introthm:B} below.

    

\begin{rem}\label{rem:upgrading_lemma_B1}
Recall that \cite[Lemma B.1]{CdSHJKEN25} states that for a statial von Neumann algebra $(B,\vphi)$ and an amalgamated free product
    \[
        (M,E)=(M_1, E_1) \underset{B}{*}(M_2,E_2),
    \]
one has $M'\cap M^\omega \subset B^\omega$ for any free ultrafilter $\omega\in \beta\N\setminus \N$ provided there are unitaries $u_1\in M_1^{\vphi\circ E_1}\cap \ker{E}$ and $u_2,u_3\in M_2^{\vphi\circ E_2}\cap \ker{E}$ satisfying $E[u_2^* u_3]=0$. But in fact, a careful reading of the proof of this lemma actually yields
    \[
        N'\cap M^\omega \subset \{u_1,u_2,u_3\}'\cap M^\omega \subset B^\omega,
    \]
for any $N\leq M$ containing $\{u_1,u_2,u_3\}$. In particular, this holds for $N=M^{\vphi\circ E}$.$\hfill\blacksquare$
\end{rem}

\begin{thm}\label{thm:irreducible_graph_products} 
Consider the graph product
    \[
        (M,\vphi) = \gp_{v\in \mathcal{V}} (M_v,\vphi_v)
    \]
over a finite simple graph $\mathcal{G}=(\mathcal{V}, \mathcal{E})$. Let $N:=\bigvee_{v\in \mathcal{V}} N_v$ for subalgebras $N_v\leq M_v$ admitting $\vphi_v$-preserving faithful normal conditional expectations. Suppose $N_v^{\vphi_v}\cap \ker{\vphi_v}$ contains a unitary for each $v\in \mathcal{V}$. Then $N\leq M$ is irreducible if and only if:
    \begin{enumerate}
        \item $N_v\leq M_v$ is irreducible whenever $v\in \mathcal{V}$ has $B(v)=\mathcal{V}$;
        \item $\dim(N_v)+\dim(N_w)\geq 5$ whenever $v,w\in \mathcal{V}$ have $S(v)=S(w)=\mathcal{V}\setminus\{v,w\}$.
    \end{enumerate}
\end{thm}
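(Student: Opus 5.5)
We split into necessity and sufficiency, organized around the vertex structure of $\mathcal{G}$.

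\emph{Necessity.} Suppose $v$ has $B(v)=\mathcal{V}$. Then $M\cong M_v\bar\otimes M_{\mathcal{V}\setminus\{v\}}$ and $N\cong N_v\bar\otimes N_{\mathcal{V}\setminus\{v\}}$, with the tensor decomposition compatible with the conditional expectations. Hence $(N_v'\cap M_v)\otimes 1\subset N'\cap M$, and irreducibility of $N\leq M$ forces (1).

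Now suppose $v\neq w$ have $S(v)=S(w)=\mathcal{V}\setminus\{v,w\}$. Then $v\not\sim w$, and everything else is adjacent to both. So $M\cong (M_v*M_w)\bar\otimes M_{\mathcal{V}\setminus\{v,w\}}$, and similarly for $N$. If $\dim N_v+\dim N_w\leq 4$, the unitary hypothesis gives $N_v\cong N_w\cong\C^2$. Then $N_v*N_w=\C^2*\C^2$ has non-trivial center by \cite[Theorem 1.1]{Dyk93}, and that center commutes with $N$. This contradicts irreducibility, so (2) is necessary.

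\emph{Sufficiency.} We induct on $|\mathcal{V}|$ using the standard amalgamated free product decomposition of graph products. Let $\mathcal{V}_0=\{v: B(v)=\mathcal{V}\}$ be the set of central vertices. Then
\[
M\cong M_{\mathcal{V}_0}\bar\otimes M_{\mathcal{V}\setminus\mathcal{V}_0},\qquad N\cong N_{\mathcal{V}_0}\bar\otimes N_{\mathcal{V}\setminus\mathcal{V}_0}.
\]
By (1), $N_{\mathcal{V}_0}'\cap M_{\mathcal{V}_0}=\C$. Given irreducibility of the other tensor factor, the relative commutant of a tensor product inclusion is the tensor product of the relative commutants. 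This uses the factoriality of the relevant algebras, which follows from the unitary assumption via \cite{CdSHJKEN25}, or alternatively the stronger ultrapower statement below. So we reduce to graphs with no central vertex, where condition (1) is vacuous.

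If the complement graph $\mathcal{G}^c$ is disconnected, then $\mathcal{G}$ is a join, and $M$ splits as a tensor product over the components of $\mathcal{G}^c$. We handle each component separately.

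\emph{A single component with exactly two vertices.} Here $\mathcal{G}$ is two non-adjacent vertices, so $M=M_v*M_w$. Condition (2) gives $\dim N_v+\dim N_w\geq 5$. Since $N_v^{\vphi_v}\cap\ker\vphi_v$ contains a unitary, $N_v$ has a diffuse summand or non-minimal central support. Theorem~\ref{thm:irreducibility_in_free_products} then gives $(Nz)'\cap (zMz)^\omega=\C z$. Moreover $z=1$: a unitary $u$ with $\vphi(u)=0$ in the centralizer forces all central atoms of $N_v$, $N_w$ to have mass at most $\frac12$, and we apply \cite[Remark 4.2]{Ued11a}.

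\emph{A component with at least three vertices.} Here $\mathcal{G}^c$ is connected on at least three vertices. Pick a vertex $v$ not adjacent to some $w$ in $\mathcal{G}$. Write
\[
M=M_{\mathcal{V}\setminus\{v\}}\underset{M_{S(v)}}{*}M_{B(v)},
\]
and similarly for $N$. Apply Remark~\ref{rem:upgrading_lemma_B1}. Take $u_1\in N_v^{\vphi_v}\cap\ker\vphi_v$. For $u_2,u_3$, choose centred unitaries from $N_w$ and from a third vertex, or products of them, so that $E[u_2^*u_3]=0$. This yields
\[
N'\cap M^\omega\subset M_{S(v)}^\omega .
\]
We then iterate: now intersecting with the commutant of $N_{S(v)}$, we proceed by induction, choosing $v$ so that the amalgam is again a graph product subalgebra, and eventually land in $\C$.

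The main obstacle is arranging the unitaries $u_2,u_3$ with $E[u_2^*u_3]=0$ using connectivity of $\mathcal{G}^c$. We must also make sure that descending through the amalgams genuinely shrinks the relative commutant to scalars rather than just to some $M_{S(v)}^\omega$. I would try picking $v,w$ at the ends of a path in $\mathcal{G}^c$, and use a second decomposition at $w$ to intersect $M_{S(v)}^\omega\cap M_{S(w)}^\omega=M_{S(v)\cap S(w)}^\omega$. Repeating this over all vertices of the component shrinks the intersection to $\C$.
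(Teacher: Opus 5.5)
Your proposal is correct and follows essentially the paper's route: decompose along the join-irreducible components (the connected components of $\mathcal{G}^c$), use (1) for singleton components, use Theorem~\ref{thm:irreducibility_in_free_products} together with the mass-$\frac12$ bound and \cite[Remark 4.2]{Ued11a} (giving $z=1$) for two-vertex components, and for components with at least three vertices use the amalgamated decomposition over $M_{S(v)}$ with Remark~\ref{rem:upgrading_lemma_B1}, which the paper simply imports from the proof of \cite[Theorem 2.4]{CdSHJKEN25}. The two points you left open close easily: intersecting the inclusions $N'\cap M^\omega\subset M_{S(v)}^\omega$ over all vertices $v$ of the component gives $M^\omega_{\bigcap_v S(v)}=\C$ because $v\notin S(v)$, and join-irreducibility supplies the unitaries (take $u_2=u_w$ for a non-neighbour $w$ of $v$, and either $u_3=u_xu_w$ for a second non-neighbour $x$ of $v$ or $u_3=u_wu_yu_w^*$ for some $y\in S(v)\setminus S(w)$; one of these must exist).
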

\begin{proof}
Let $\mathcal{G}=\mathcal{G}_1+\cdots +\mathcal{G}_d$ be a decomposition of into join-irreducible subgraphs $\mathcal{G}_j=(\mathcal{V}_j, \mathcal{E}_j)$ so that
    \[
        M \cong \underset{1\leq j\leq d}{\overline{\bigotimes}} \left(\bigvee_{v\in \mathcal{V}_j} M_v\right) \qquad \text{ and } \qquad N \cong \underset{1\leq j\leq d}{\overline{\bigotimes}} \left(\bigvee_{v\in \mathcal{V}_j} N_v\right)
    \]
(see \cite[Proposition 2.1]{CdSHJKEN25}). Consequently, $N\leq M$ is irreducible if and only if $Q_j:=\bigvee_{v\in \mathcal{V}_j} N_v \leq \bigvee_{v\in \mathcal{V}_j} M_v=: P_j$ is irreducible for each $j=1,\ldots, d$. If $|\mathcal{V}_j| \geq 3$, then $Q_j\leq P_j$ is irreducible by the proof of \cite[Theorem 2.4]{CdSHJKEN25} but using Remark~\ref{rem:upgrading_lemma_B1} instead of \cite[Lemma B.1]{CdSHJKEN25}. Thus $N\leq M$ is irreducible if and only if $Q_j \leq P_j$ is irreducible for each $1\leq j\leq d$ with $|\mathcal{V}_j|\leq 2$. One has $\mathcal{V}_j=\{v\}$ if and only if $B(v)=\mathcal{V}$, so that $Q_j= N_v$ and $P_j = M_v$, and one has $\mathcal{V}_j =\{v,w\}$ if and only if $S(v)=S(w)=\mathcal{V}\setminus\{v,w\}$, so that $Q_j = N_v * N_w$ and $P_j= M_v * M_w$. Thus $Q_j \leq P_j$ is irreducible for each $1\leq j \leq d$ with $|\mathcal{V}_j|=1$ if and only if (1) holds. Additionally, our assumption that $N_v^{\vphi_v}$ and $N_w^{\vphi_w}$ each contain a state-zero unitary implies the minimal projections in these centralizers have mass at most $\frac12$ (see \cite[Corollary A.3]{CdSHJKEN25}), and therefore $N_v*N_w$ has no atomic summand by \cite[Remark 4.2]{Ued11a}. Thus Theorem~\ref{introthm:A} implies that $Q_j \leq P_j$ is irreducible for each $1\leq j \leq d$ with $|\mathcal{V}_j|=2$ if and only if (2) holds. Hence the irreducibility of $N\leq M$ is equivalent to (1) and (2).
\end{proof}

In the previous theorem, one might hope to upgrade the irreducibility of $N\leq M$ to $N\leq M^\omega$ for any free ultrafilter $\omega\in \beta\N \setminus \N$ by replacing (1) with the assumption that $N_v \leq M_v^\omega$ is irreducible. But, using the notation of the above proof, this would only yield
    \[
        (Q_1\bar\otimes \cdots \bar\otimes Q_d)' \cap (P_1^\omega \bar\otimes \cdots \bar\otimes P_d^\omega) = (Q_1'\cap P_1^\omega)\bar\otimes \cdots \bar\otimes (Q_d' \cap P_d^\omega) = \C.
    \]
Since the $P_1^\omega \bar\otimes \cdots \bar\otimes P_d^\omega$ can be significantly smaller than $(P_1\bar\otimes \cdots \bar\otimes P_d)^\omega$ (see, for example, \cite[Proposition 3.17.(1)]{GKEPT25}), one does not immediately have $N'\cap M^\omega = \C$. This same issue will affect the conclusion of Theorem~\ref{thm:extremal_graph_products} below, which is our application of Theorem~\ref{introthm:B}. Before proceeding to this, we first record the following strengthening of \cite[Theorem 2.4]{CdSHJKEN25} for join-irreducible graphs with at least three vertices.

\begin{prop}\label{prop:join-irreducible_extremal}
Let $\mathcal{G}=(\mathcal{V},\mathcal{E})$ be a join-irreducible, finite, simple graph with $|\mathcal{V}|\geq 3$, and consider a graph product $(M,\vphi) = \gp_{v\in \mathcal{V}} (M_v,\vphi_v)$ over $\mathcal{G}$. Suppose $M_v^{\vphi_v}\cap \ker{\vphi_v}$ contains a unitary for each $v\in \mathcal{V}$. Then $(M^\vphi)'\cap M^\omega = \C$ for any free ultrafilter $\omega\in \beta\N\setminus \N$, and
    \[
        M \text{ is a factor of type }\begin{cases}
            \mathrm{II}_1 & \text{if }G=\{1\}\\
            \mathrm{III}_\lambda & \text{if }G = \lambda^\Z \\
            \mathrm{III}_1 & \text{if } G= \R_+
        \end{cases},
    \]
where $G\leq \R_+$ is the closed subgroup generated by the spectra of the modular operators of $\vphi_v$ for all $v\in \mathcal{V}$.
\end{prop}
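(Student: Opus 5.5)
Following the remark after Theorem~\ref{thm:irreducible_graph_products}, I would rerun the proof of \cite[Theorem 2.4]{CdSHJKEN25} with Remark~\ref{rem:upgrading_lemma_B1} in place of \cite[Lemma B.1]{CdSHJKEN25}, taking the subalgebra to be $N=M^\vphi$. Since $\mathcal{G}$ is join-irreducible with $|\mathcal{V}|\geq 3$, the cited proof writes $M$ as amalgamated free products $(M,E)=(M_1,E_1)\underset{B}{*}(M_2,E_2)$ with $B=\bigvee_{w\in\mathcal{W}}M_w$ and $M_1,M_2$ graph products over subgraphs containing $\mathcal{W}$. Each $E$ is the $\vphi$-preserving expectation onto a sub-graph-product, so $\vphi\circ E=\vphi$. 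The unitaries $u_1\in M_1^{\vphi\circ E_1}\cap\ker E$ and $u_2,u_3\in M_2^{\vphi\circ E_2}\cap\ker E$ with $E[u_2^*u_3]=0$ are built as products of the state-zero unitaries in $M_v^{\vphi_v}$. For example, $u_2=u_a$ and $u_3=u_a u_b$, or $u_3=u_a^2$ if needed, for non-adjacent vertices lying outside $\mathcal{W}$. These products lie in $M^\vphi$, since each $M_v^{\vphi_v}\subset M^\vphi$.

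Remark~\ref{rem:upgrading_lemma_B1} then gives $(M^\vphi)'\cap M^\omega\subset B^\omega$ for each such decomposition. Intersecting over the decompositions used in \cite{CdSHJKEN25} leaves
\[
(M^\vphi)'\cap M^\omega\subset\bigcap B^\omega.
\]
The cited argument arranges these amalgams so that the intersection is trivial. One way is to iterate: restrict to an element commuting with $M^\vphi\cap B$ inside $B^\omega$ and recurse over a shrinking vertex set. The other is to choose two decompositions whose amalgams have vertex sets with empty intersection, and use the ultrapower version of the fact that $B_1^\omega\cap B_2^\omega=(B_1\cap B_2)^\omega$, which holds for expectations that commute as graph-product expectations do. The result is $(M^\vphi)'\cap M^\omega=\C$.

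For the type, the irreducibility just proved gives $(M^\vphi)'\cap M=\C$, so $M$ is a factor and the centralizer is irreducible. When $M^\vphi$ is irreducible in a factor $M$, Connes' results give $S(M)=\operatorname{Sp}(\Delta_\vphi)$. It remains to compute this spectrum. $L^2(M,\vphi)$ decomposes into tensor products $\mathring{L}^2(M_{v_1})\otimes\cdots\otimes\mathring{L}^2(M_{v_n})$ over reduced words, and $\Delta_\vphi$ acts on each summand as $\Delta_{\vphi_{v_1}}\otimes\cdots\otimes\Delta_{\vphi_{v_n}}$. So the spectrum is the closure of all products of elements of $\operatorname{Sp}(\Delta_{\vphi_v})$ along reduced words. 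This closure contains $1$ and is closed under products and inverses, since $\operatorname{Sp}(\Delta_{\vphi_v})$ is inverse-symmetric and words can repeat vertices: for instance $v\,w\,v$ with $v,w$ non-adjacent, or the state-zero unitaries give $1$ in each spectrum. Hence it equals $G$. This gives type $\mathrm{III}_\lambda$ when $G=\lambda^\Z$ and $\mathrm{III}_1$ when $G=\R_+$. If $G=\{1\}$, then every $\vphi_v$ is tracial, so $\vphi$ is a faithful normal trace and $M$ is a $\mathrm{II}_1$ factor. It is diffuse because the state-zero unitary in $M_v^{\vphi_v}$ generates a diffuse algebra in the graph product; alternatively, irreducibility of a diffuse centralizer excludes type $\mathrm{I}$.

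The main obstacle is the first step. I would have to check that every place in the \cite{CdSHJKEN25} proof where Lemma B.1 is invoked uses unitaries lying in $M^\vphi$, and that the ultrapower intersection argument carries through with the upgraded relative commutant. A secondary point is the parenthetical possibility that some $\operatorname{Sp}(\Delta_{\vphi_v})$ is not a group. That is handled by taking products over longer reduced words, which is possible because join-irreducibility with $|\mathcal{V}|\geq 3$ gives non-adjacent pairs for each vertex.
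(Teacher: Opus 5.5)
Your proposal is correct. For the relative commutant you do exactly what the paper does: you rerun the proof of \cite[Theorem 2.4]{CdSHJKEN25} with Remark~\ref{rem:upgrading_lemma_B1} in place of Lemma~B.1. The only point to check is that the unitaries fed into that lemma lie in $M_i^{\vphi\circ E_i}$ with $\vphi\circ E=\vphi$, hence in $M^\vphi$. Your guesses about the internal structure of that proof are not needed, and the specifics need care: $u_3=u_a^2$ need not be centered, for instance when $u_a$ is a symmetry.

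For the type, you and the paper rely on the same underlying fact. Factoriality of $M^\vphi$ makes $\S(M)\setminus\{0\}$ equal to the full spectrum of $\sigma^\vphi$, i.e.\ the Connes spectrum equals the Arveson spectrum. You differ only in how that spectrum is computed.

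\begin{itemize}
\item The paper stays inside $M$. It bounds the Arveson spectrum of $\sigma^\vphi$ using the $\sigma^\vphi$-invariant generating subalgebras $M_v$ and \cite[Lemma XI.1.12]{Tak03}. It then identifies the Arveson spectrum of $\sigma^{\vphi_v}$ with $\operatorname{Sp}(\Delta_{\vphi_v})\setminus\{0\}$ via \cite[Proposition XI.1.24]{Tak03}.
\item You instead diagonalize $\Delta_\vphi$ directly on the reduced-word decomposition of $L^2(M,\vphi)$ from \cite{CF17}. This is more hands-on. It needs your observation that the state-zero unitaries put $1$ in the spectrum of $\Delta_{\vphi_v}$ restricted to $L^2(M_v,\vphi_v)\ominus\C 1$.
\end{itemize}

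Your combinatorial argument that products along reduced words form a group is unnecessary. Since $\S(M)\setminus\{0\}$ is already a closed subgroup, the inclusions $\operatorname{Sp}(\Delta_{\vphi_v})\subset\operatorname{Sp}(\Delta_\vphi)\subset G\cup\{0\}$ suffice, and this is exactly how the paper concludes.

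Finally, in the case $G=\{1\}$, a single state-zero unitary need not generate a diffuse algebra; a symmetry in $\C^2$ does not. Use instead $u_vu_w$ for non-adjacent $v,w$, which is a Haar unitary by freeness, or simply note that $M$ is infinite dimensional.
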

\begin{proof}
The first claim follows from the proof of \cite[Theorem 2.4]{CdSHJKEN25}, but with Remark~\ref{rem:upgrading_lemma_B1} replacing each invocation of \cite[Lemma B.1]{CdSHJKEN25}. To determine the (sub)type of $M$, we must compute the modular spectrum $\S(M)$, and, up to removing zero, this is given by Connes spectrum $\Gamma(\sigma^\vphi)$ of the modular automorphism group of $\vphi$ (see the proof of \cite[Theorem XII.1.6]{Tak03}). The factoriality of $M^\vphi$ implies the Connes spectrum of $\sigma^\vphi$ equals its Arveson spectrum (see \cite[Lemma XI.2.2]{Tak03}), and the latter equals the closed subgroup generated by the Arveson spectrum of $\sigma^{\vphi_v}$, $v\in \mathcal{V}$, by a combination of \cite[Lemma XI.1.12]{Tak03} and the fact that $\sigma^\vphi$ restricts to $\sigma^{\vphi_v}$ on $M_v$ for each $v\in \mathcal{V}$. Finally, \cite[Proposition XI.1.24]{Tak03} implies the Arveson spectrum of $\sigma^{\vphi_v}$ is nothing but the usual spectrum of $\Delta_{\vphi_v}$ less zero, and so we have shown $\S(M)\setminus \{0\}= G$. In the case that $G$ is non-trivial, we obtain the claimed type for $M$ through \cite[Theorem XII.1.6]{Tak03}. If $G=\{1\}$, then evidently each $\vphi_v$ (and hence $\vphi$) is tracial and thus $M$ is a $\mathrm{II}_1$ factor.
\end{proof}

\begin{thm}\label{thm:extremal_graph_products}
Consider the graph product
    \[
        (M,\vphi) = \gp_{v\in \mathcal{V}} (M_v,\vphi_v)
    \]
over a finite simple graph $\mathcal{G}=(\mathcal{V}, \mathcal{E})$. Assume that $M_v^{\vphi_v}\cap \ker{\vphi_v}$ contains a unitary for each $v\in \mathcal{V}$ and that:
    \begin{enumerate}
        \item $M_v^{\vphi_v}\leq M_v$ is irreducible whenever $v\in \mathcal{V}$ has $B(v)=\mathcal{V}$;
        \item $\dim(M_v^{(\vphi_v,\text{ap})}) + \dim(M_w^{(\vphi_w,\text{ap})}) \geq 5$ whenever $v,w\in \mathcal{V}$ have $S(v) = S(w) = \mathcal{V}\setminus \{v,w\}$.
    \end{enumerate}
Then $M^\vphi\leq M$ is irreducible.
\end{thm}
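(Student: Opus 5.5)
We follow the proof of Theorem~\ref{thm:irreducible_graph_products}. Decompose $\mathcal{G}=\mathcal{G}_1+\cdots+\mathcal{G}_d$ into join-irreducible subgraphs $\mathcal{G}_j=(\mathcal{V}_j,\mathcal{E}_j)$. By \cite[Proposition 2.1]{CdSHJKEN25},
\[
(M,\vphi)\cong \underset{1\leq j\leq d}{\overline{\bigotimes}} (P_j,\psi_j), \qquad P_j:=\bigvee_{v\in\mathcal{V}_j} M_v,
\]
where $\psi_j:=\vphi|_{P_j}$ is the graph product state over $\mathcal{G}_j$. The modular group of a tensor product state is the tensor product of the modular groups, so $M^\vphi\supseteq P_1^{\psi_1}\bar\otimes\cdots\bar\otimes P_d^{\psi_d}$. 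Hence
\[
(M^\vphi)'\cap M\subseteq \bigl(P_1^{\psi_1}\bar\otimes\cdots\bar\otimes P_d^{\psi_d}\bigr)'\cap (P_1\bar\otimes\cdots\bar\otimes P_d)=\bigl((P_1^{\psi_1})'\cap P_1\bigr)\bar\otimes\cdots\bar\otimes\bigl((P_d^{\psi_d})'\cap P_d\bigr),
\]
by the tensor product commutation theorem. It therefore suffices to show that $P_j^{\psi_j}\leq P_j$ is irreducible for each $j$.

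We split by $|\mathcal{V}_j|$.
\begin{itemize}
\item If $|\mathcal{V}_j|\geq 3$, Proposition~\ref{prop:join-irreducible_extremal} gives $(P_j^{\psi_j})'\cap P_j^\omega=\C$, and in particular $(P_j^{\psi_j})'\cap P_j=\C$. Its unitary hypothesis holds for each $v\in\mathcal{V}_j$ by assumption.
\item If $\mathcal{V}_j=\{v\}$, then $B(v)=\mathcal{V}$, $P_j=M_v$ and $\psi_j=\vphi_v$, so irreducibility is exactly hypothesis (1).
\item If $\mathcal{V}_j=\{v,w\}$, then $S(v)=S(w)=\mathcal{V}\setminus\{v,w\}$ and $(P_j,\psi_j)=(M_v,\vphi_v)*(M_w,\vphi_w)$. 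We apply Theorem~\ref{thm:full_centralizer}. Non-ergodicity holds because $M_v^{\vphi_v}$ contains a unitary in $\ker\vphi_v$, so $M_v^{\vphi_v}\neq\C$. Condition (i) is hypothesis (2). This gives $(P_j^{\psi_j}z)'\cap(zP_jz)^\omega=\C z$, where $z$ is the central support of the diffuse summand of $P_j^{\psi_j}$.
\end{itemize}

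The main obstacle is upgrading this to irreducibility of all of $P_j^{\psi_j}$, which requires $z=1$. By Remark~\ref{rem:diffuse_free_product_centralizer}, it suffices that neither $M_v^{\vphi_v}$ nor $M_w^{\vphi_w}$ has a non-trivial minimal central projection of mass exceeding $\frac12$. This follows from the state-zero unitary in the centralizer via \cite[Corollary A.3]{CdSHJKEN25}, which bounds the masses of minimal projections in these centralizers by $\frac12$, exactly as in the proof of Theorem~\ref{thm:irreducible_graph_products}. With $z=1$ we get $(P_j^{\psi_j})'\cap P_j\subseteq (P_j^{\psi_j})'\cap P_j^\omega=\C$.

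Combining the three cases, every factor in the tensor decomposition above is $\C$, so $(M^\vphi)'\cap M=\C$.
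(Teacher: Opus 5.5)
Your proposal is correct and follows the paper's proof. Both use the tensor decomposition into join-irreducible components, settle the one-, two- and $\geq 3$-vertex cases by hypothesis (1), Theorem~\ref{thm:full_centralizer} with Remark~\ref{rem:diffuse_free_product_centralizer}, and Proposition~\ref{prop:join-irreducible_extremal} respectively, and conclude with the relative-commutant identity for tensor products. Your added justifications (non-ergodicity from the state-zero unitary, and $z=1$ via \cite[Corollary A.3]{CdSHJKEN25} as in Theorem~\ref{thm:irreducible_graph_products}) just make explicit what the paper leaves implicit.
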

\begin{proof}
Arguing as in the proof of Theorem~\ref{thm:irreducible_graph_products}, we decompose
    \[
        (M,\vphi) = \left( P_1\bar\otimes \cdots \bar\otimes P_d, \vphi_1\otimes \cdots \otimes \vphi_d \right),
    \]
where each $(P_j,\vphi_j)$ corresponds to a join-irreducible subgraph $\mathcal{G}_j=(\mathcal{V}_j, \mathcal{E}_j)$ in the decomposition of $\mathcal{G}$. We claim that $P_j^{\vphi_j}\leq P_j$ is irreducible for each $j=1,\ldots, d$. If $\mathcal{V}_j$ consists of either one or at least three vertices, then $P_j^{\vphi_j}\leq P_j$ is irreducible by either (1) or Proposition~\ref{prop:join-irreducible_extremal}, respectively. If $|\mathcal{V}_j|=2$ so that $P_j$ is a free product, then Remark~\ref{rem:diffuse_free_product_centralizer} implies $P_j^{\vphi_j}$ has no atomic summand, and therefore (2) and Theorem~\ref{introthm:B} imply $P_j^{\vphi_j}\leq P_j$ is irreducible. With the claim in hand, we therefore have
    \[
        (M^\vphi)' \cap M \subset (P_1^{\vphi_1}\bar\otimes \cdots \bar\otimes P_d^{\vphi})'\cap (P_1\bar\otimes \cdots \bar\otimes P_d) = ( (P_1^{\vphi_1})'\cap P_1) \bar\otimes \cdots \bar\otimes ( (P_d^{\vphi_d})'\cap P_d) =\C, 
    \]
where the first inclusion follows from $P_j^{\vphi_j}\leq M^\vphi$ for each $j=1,\ldots, d$.
\end{proof}

\begin{rem}\label{rem:graph_product_type}
Under the assumptions of Theorem~\ref{thm:extremal_graph_products}, $M$ is a factor and its type can be computed as follows. Using the decomposition
    \[
        (M,\vphi) = \left( P_1\bar\otimes \cdots \bar\otimes P_d, \vphi_1\otimes \cdots \otimes \vphi_d \right),
    \]
the type of $M$ is determined by the type of $P_j$ (along with the well-known rules for how types combine under tensor products). If $\mathcal{V}_j=\{v\}$, then (1) implies the modular spectrum of $P_j= M_v$ is given by the spectrum of $\Delta_{\vphi_v}$ and so is of type $\mathrm{II}_1$ or type $\mathrm{III}_\lambda$ with $\lambda\neq 0$; if $\mathcal{V}_j=\{v,w\}$, then the type of $P_j = M_v * M_w$ is determined from \cite[Theorem 4.1]{Ued11a}; and if $|\mathcal{V}_j|\geq 3$, then the type of $P_j$ is determined from   Proposition~\ref{prop:join-irreducible_extremal}. In particular, $M$ can be of type $\mathrm{II}_1$ or type $\mathrm{III}_\lambda$ with $\lambda \neq 0$, and the former occurs if and only if $\vphi_v$ is tracial for all $v\in \mathcal{V}$.$\hfill\blacksquare$
\end{rem}

\begin{rem}
Under the assumptions of Theorem~\ref{thm:extremal_graph_products}, further assume that each $\vphi_v$ is almost periodic. Then condition (2) is simply $\dim(M_v)+\dim(M_w)\geq 5$ whenever $v,w\in \mathcal{V}$ have $S(v)=S(w)=\mathcal{V}\setminus\{v,w\}$, and in the proof of Theorem~\ref{thm:extremal_graph_products} one can cite \cite[Theorem 2.1]{Ued11b} in place of Theorem~\ref{introthm:B}. Furthermore, if each $M_v$ is separable and condition (1) is strengthened to $M_v^{\vphi_v} \leq M_v^\omega$ being irreducible for any free ultrafilter $\omega\in \beta\N\setminus \N$, then $M$ will be a separable full factor by \cite[Theorem E]{CdSHJKEN25}. In this case, \cite[Lemma 4.8]{Con74} can be used to show that $\Sd(M)=\Sd(\vphi)$, and the latter is simply the (not necessarily closed) subgroup of $\R_+$ generated by $\Sd(\vphi_v)$ for all $v\in \mathcal{V}$.$\hfill\blacksquare$
\end{rem}

Recall that a faithful normal state $\vphi$ on a von Neumann algebra $M$ is said to be \emph{extremal} if $M^\vphi$ is a factor. We conclude with a pair of instances in which the extremality of graph products of almost periodic states can be fully characterized. As can be seen from the proof of Theorem~\ref{thm:extremal_graph_products}, the key obstacle is determining when tensor product states are extremal. Thus, we begin with some observations in this direction (parts of which have previously appeared in the literature; see, for example, \cite[Lemma 5.2.(ii)]{HI24}). 

\begin{prop}\label{prop:extremal_tensor_products}
Consider the tensor product
    \[
        (M,\vphi)= (M_1\bar\otimes M_2, \vphi_1\otimes \vphi_2),   
    \]
where $M_1$ and $M_2$ are factors and $\vphi_1$ and $\vphi_2$ are almost periodic states.
    \begin{enumerate}[label=(\alph*)]
        \item If $M_2$ is type $\mathrm{I}$, then $\vphi$ is extremal if and only if $\vphi_1$ is extremal and $\Sd(\vphi_2)\subset \Sd(\vphi_1)$.

        \item If $\vphi_1$ is extremal, then $\vphi$ is extremal if and only if
            \[
                \left[\bigcup_{\lambda \in \Sd(\vphi_1)\cap \Sd(\vphi_2)} M_2^{(\vphi_2,\lambda)} \right]' \cap M_2^{\vphi_2} = \C.
            \]
    \end{enumerate}
\end{prop}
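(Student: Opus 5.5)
The plan is to describe the centralizer of the tensor product state through eigenoperators and then compute its center by hand. Because $\vphi_1$ and $\vphi_2$ are almost periodic, so is $\vphi=\vphi_1\otimes\vphi_2$, and $\sigma_t^\vphi=\sigma_t^{\vphi_1}\otimes\sigma_t^{\vphi_2}$. Using the diagonalizations of $\Delta_{\vphi_1}$ and $\Delta_{\vphi_2}$, we get $\Delta_\vphi=\Delta_{\vphi_1}\otimes\Delta_{\vphi_2}$, and the fixed-point space of $\Delta_\vphi^{it}$ in $L^2(M,\vphi)$ is the closed span of $L^2(M_1^{(\vphi_1,\lambda)})\otimes L^2(M_2^{(\vphi_2,1/\lambda)})$ over $\lambda\in\Sd(\vphi_1)\cap\Sd(\vphi_2)^{-1}$. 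Since $\Sd$ is a group, this is the same as $\lambda\in\Sd(\vphi_1)\cap\Sd(\vphi_2)$. Hence
\[
M^\vphi=\Bigl(\bigcup_{\lambda\in \Sd(\vphi_1)\cap\Sd(\vphi_2)} M_1^{(\vphi_1,\lambda)}\otimes M_2^{(\vphi_2,\lambda^{-1})}\Bigr)''.
\]
This contains $M_1^{\vphi_1}\bar\otimes M_2^{\vphi_2}$, and $M^\vphi$ is the closure of the sums of the $\lambda$-graded pieces. Both parts follow from computing $Z(M^\vphi)$ from this description.

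\emph{Part (b).} Assume $\vphi_1$ is extremal. Let $x\in Z(M^\vphi)$. Since $x$ commutes with $M_1^{\vphi_1}\otimes 1$, we have $x\in ((M_1^{\vphi_1})'\cap M_1)\bar\otimes M_2$. By Connes' result [Con72, Theorem 10], $(M_1^{\vphi_1})'\cap M_1=Z(M_1^{\vphi_1})=\C$, so $x\in 1\otimes M_2$. Commuting with $1\otimes M_2^{\vphi_2}$ and using $(M_2^{\vphi_2})'\cap M_2= M_2^{\vphi_2}$ puts $x$ in $1\otimes M_2^{\vphi_2}$; in fact $x\in 1\otimes Z(M_2^{\vphi_2})$. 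Write $x=1\otimes b$. Commuting with $a\otimes c$ for nonzero $a\in M_1^{(\vphi_1,\lambda)}$ and $c\in M_2^{(\vphi_2,\lambda^{-1})}$ gives $a\otimes[b,c]=0$, so $b$ commutes with $M_2^{(\vphi_2,\lambda^{-1})}$. Taking adjoints, $b$ commutes with all $M_2^{(\vphi_2,\lambda)}$ for $\lambda$ in the common point spectrum. The converse holds because any such $b$ gives $1\otimes b$ in $Z(M^\vphi)$. So $Z(M^\vphi)\cong$ the stated relative commutant, which proves (b).

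\emph{Part (a).} Here $M_2=B(\H)$ and $\vphi_2=\mathrm{Tr}(h\,\cdot)$ with $h$ diagonal and trace class. Necessity is the first thing to establish. If $\vphi$ is extremal, then $\vphi_1$ must be extremal; one way to see this is to compress by a minimal projection $e\in M_2^{\vphi_2}$, giving $(1\otimes e)M^\vphi(1\otimes e)=M_1^{\vphi_1}\otimes e$, and a corner of a factor is a factor. Part (b) then reduces the question to whether $\bigl[\bigcup_{\lambda\in \Sd(\vphi_1)\cap \Sd(\vphi_2)}M_2^{(\vphi_2,\lambda)}\bigr]'\cap M_2^{\vphi_2}=\C$. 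In this setting $M_2^{\vphi_2}=\bigoplus_\mu B(\ker(h-\mu))$, and $M_2^{(\vphi_2,\lambda)}$ consists of the matrix units between the eigenspaces for $\mu$ and $\mu'$ with $\mu/\mu'=\lambda$; check which of $\mu/\mu'$ or $\mu'/\mu$ is the right convention. The relative commutant is then the algebra spanned by the projections onto unions of eigenspaces that form equivalence classes under the relation generated by ratios in $\Sd(\vphi_1)$. It is trivial exactly when every ratio of eigenvalues of $h$ is connected through ratios in $\Sd(\vphi_1)$. Since $\Sd(\vphi_1)$ is a group and $\Sd(\vphi_2)$ is generated by all such ratios, this is equivalent to $\Sd(\vphi_2)\subset\Sd(\vphi_1)$.

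The main obstacle is the first step, namely justifying the description of $M^\vphi$ as the $\sigma$-weak closure of these graded pieces. The plan is to use the $L^2$ eigenspace decomposition together with the $\vphi$-preserving expectation onto $M^\vphi$ given by averaging $\sigma^\vphi$, applied to elementary tensors of eigenoperators. The remaining computations are routine.
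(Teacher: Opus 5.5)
Your part (b) is essentially the paper's proof. The paper likewise reads off $M^\vphi$ as the weak closure of the span of the $M_1^{(\vphi_1,\lambda)}\otimes M_2^{(\vphi_2,1/\lambda)}$ from $\sigma^\vphi_t=\sigma^{\vphi_1}_t\otimes\sigma^{\vphi_2}_t$, without further comment. Your plan, applying the $\vphi$-preserving expectation onto $M^\vphi$ to elementary tensors of eigenoperators, is a reasonable way to fill that in. The paper then uses extremality of $\vphi_1$ and the tensor commutation theorem to place the center in $\C\otimes M_2^{\vphi_2}$, and tests against $y\otimes z$, exactly as you do.

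The real difference is part (a). The paper gives no argument and simply quotes \cite[Lemmas 2.1 and 2.2]{HN26} for the two directions, whereas you deduce (a) from (b). Compressing by $1\otimes e$, with $e$ a rank-one eigenprojection of the density $h$, correctly gives $(1\otimes e)M^\vphi(1\otimes e)=M_1^{\vphi_1}\otimes e$, hence extremality of $\vphi_1$. Your computation of the relative commutant in (b) is also right: it consists of the bounded functions of the eigenvalues of $h$ that are constant on classes of $\mu\sim\mu'\iff\mu/\mu'\in\Sd(\vphi_1)$. Your route gives a self-contained proof that shows (a) is a special case of (b). The paper's route is shorter but defers to the external reference.

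Two statements should be corrected, though neither breaks the argument.

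First, $\Sd(\vphi)$ is not a group in general. For $M_2(\C)$ with density $\mathrm{diag}(a,b)$, $a\neq b$, one gets $\{1,a/b,b/a\}$. What always holds is $\Sd(\vphi)^{-1}=\Sd(\vphi)$, and that is all you need to replace $\Sd(\vphi_2)^{-1}$ by $\Sd(\vphi_2)$ in your first paragraph and in (b). In the ``only if'' half of (a), however, you genuinely need $\Sd(\vphi_1)$ closed under products. Otherwise $\sim$ need not be transitive, and ``one class'' would not mean every ratio $\mu/\mu'$ lies in $\Sd(\vphi_1)$. Closure follows from extremality of $\vphi_1$: given $x\in M_1^{(\vphi_1,\lambda)}$ and $y\in M_1^{(\vphi_1,\mu)}$, factoriality of $M_1^{\vphi_1}$ yields a unitary $u\in M_1^{\vphi_1}$ with $xuy\neq 0$ (cf.\ \cite[Remark 2.2]{GGLN25}). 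You have established extremality by that point, so say so explicitly.

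Second, $(M_2^{\vphi_2})'\cap M_2$ equals $Z(M_2^{\vphi_2})$, not $M_2^{\vphi_2}$. The step is unnecessary anyway, since $1\otimes b\in M^\vphi$ already forces $b\in M_2^{\vphi_2}$.
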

\begin{proof}
\textbf{(a):} The ``if'' and ``only if'' statements follow from \cite[Lemmas 2.1 and 2.2]{HN26}, respectively.

\noindent \textbf{(b):} We claim that
    \begin{align}\label{eqn:tensor_centralizer_center}
        (M^\vphi)'\cap M^\vphi = \C \otimes \left(\left[\bigcup_{\lambda \in \Sd(\vphi_1)\cap \Sd(\vphi_2)} M_2^{(\vphi_2,\lambda)} \right]' \cap M_2^{\vphi_2}\right),
    \end{align}
from which the claimed equivalence is immediate. First observe that since $\sigma^\vphi_t = \sigma_t^{\vphi_1}\otimes \sigma_t^{\vphi_2}$ for all $t\in \R$, it follows that $M^\vphi$ is the weak operator topology closure of the span of 
    \[
       \bigcup_{\lambda\in \Sd(\vphi_1)\cap \Sd(\vphi_2)} M_1^{(\vphi_1,\lambda)}\otimes M_2^{(\vphi_2,1/\lambda)}.
    \]
This gives the `$\supset$' inclusion in (\ref{eqn:tensor_centralizer_center}). On the other hand, we can use the extremality of $\vphi_1$ to observe that
    \[
        (M^\vphi)' \cap M \subset (M_1^{\vphi_1}\bar\otimes M_2^{\vphi_2})'\cap (M_1\bar\otimes M_2) = ( (M_1^{\vphi_1})'\cap M_1) \bar\otimes ( (M_2^{\vphi_2})'\cap M_2) = \C \bar\otimes ( (M_2^{\vphi_2})'\cap M_2).
    \]
Thus any element of $(M^\vphi)'\cap M^\vphi$ is of the form $1\otimes x$ for $x\in M_2^{\vphi_2}$---fix such an element. For $\lambda\in \Sd(\vphi_1)\cap \Sd(\vphi_2)$ let $y\in M_1^{(\vphi_1, 1/\lambda)}$ and $z\in M_2^{(\vphi_2, \lambda)}$ so that $y\otimes z \in M^\vphi$, and therefore
    \[
        y \otimes [x,z] = [(1\otimes x), y\otimes z]=0.
    \]
Since $y$ is necessarily non-zero, we can apply $\vphi_1(y^*\,\cdot\,)\otimes \text{id}$ to the above to see that $[x,z]=0$. Thus $x$ commutes with $M_2^{(\vphi_2, \lambda)}$ for each $\lambda\in \Sd(\vphi_1)\cap \Sd(\vphi_2)$, and we have therefore shown the other inclusion in (\ref{eqn:tensor_centralizer_center}).
\end{proof}

Note that one always has $M'\cap M \subset M^\vphi$, and so we do not sacrifice any generality due to our assumption that the graph product $M$ is a factor in the following characterizations of the extremality of $\vphi$. Moreover, we know precisely when $M$ is a factor by \cite[Main Theorem 0.5.(2)]{CdSHJKEN25} (and our ubiquitous assumption that the centralizer of each vertex algebra contains a state-zero unitary).

\begin{thm}\label{thm:extremal_atomic_graph_products}
Let $\mathcal{G}=(\mathcal{V},\mathcal{E})$ be an incomplete, finite, simple graph, and let
    \[
        (M,\vphi)= \gp_{v\in \mathcal{V}} (M_v,\vphi_v)
    \]
be a graph product over $\mathcal{G}$ of atomic von Neumann algebras. Suppose $M$ is a factor and $M_v^{\vphi_v}\cap \ker{\vphi_v}$ contains a unitary for each $v\in \mathcal{V}$. Denote by $\mathcal{V}_0$ the set of vertices $v\in \mathcal{V}$ with $B(v)=\mathcal{V}$, and let $\Gamma\leq \R_+$ be the group generated by $\Sd(\vphi_w)$ for $w\in \mathcal{V}\setminus \mathcal{V}_0$. 
Then $\vphi$ is extremal if and only if one has $\Sd(\vphi_v) \subset \Gamma$ for all $v\in \mathcal{V}_0$.
\end{thm}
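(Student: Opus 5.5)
The plan is to reduce to a tensor product and apply Proposition~\ref{prop:extremal_tensor_products}(a) repeatedly. First decompose $\mathcal{G}=\mathcal{G}_1+\cdots+\mathcal{G}_d$ into join-irreducible subgraphs as in the proof of Theorem~\ref{thm:irreducible_graph_products}. The singleton components are exactly the vertices $v\in\mathcal{V}_0$. Since $\mathcal{G}$ is incomplete, the vertices of $\mathcal{V}\setminus\mathcal{V}_0$ form a nonempty union of components with at least two vertices. Writing $P$ for the graph product over the induced subgraph on $\mathcal{V}\setminus\mathcal{V}_0$ with state $\psi$, we obtain
    \[
        (M,\vphi)\cong (P,\psi)\,\bar\otimes\, \underset{v\in\mathcal{V}_0}{\overline{\bigotimes}} (M_v,\vphi_v).
    \]
Each $M_v$ is an atomic factor, since $M$ is a factor and $M_v$ is a tensor factor of it; hence $M_v$ is type $\mathrm{I}$. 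The state $\vphi_v$ on $M_v$ is almost periodic, because it is given by a positive trace-class density. The states on atomic vertex algebras are almost periodic, and graph products of almost periodic states are almost periodic, so $\psi$ is almost periodic. Moreover $\Sd(\psi)=\Gamma$: the eigenoperators of $\sigma^\psi$ are spanned by reduced words of vertex eigenoperators, whose eigenvalues multiply, and $\Sd(\vphi_w)$ is closed under inverses.

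The second step is to show that $\psi$ is extremal, i.e.\ $P^\psi$ is a factor. Decompose $P=P_1\bar\otimes\cdots\bar\otimes P_k$ over the join-irreducible components with at least two vertices. For components with at least three vertices, Proposition~\ref{prop:join-irreducible_extremal} gives $(P_j^{\psi_j})'\cap P_j=\C$. For two-vertex components $P_j=M_v*M_w$, the state-zero unitaries give $\dim M_v,\dim M_w\ge 2$, and in fact $\ge 3$ when the algebras are non-abelian. I need $\dim(M_v)+\dim(M_w)\geq 5$. If this fails, then $M_v\cong M_w\cong\C^2$ with weights $\tfrac12$, and $\C^2*\C^2$ is not a factor by \cite{Dyk93}, contradicting factoriality of $M$. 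So Theorem~\ref{introthm:B} (or \cite[Theorem 2.1]{Ued11b}) applies; by Remark~\ref{rem:diffuse_free_product_centralizer} there is no atomic part, so $P_j^{\psi_j}\leq P_j$ is irreducible. As in Theorem~\ref{thm:extremal_graph_products}, $(P^\psi)'\cap P\subset\bigotimes_j((P_j^{\psi_j})'\cap P_j)=\C$. Hence $\psi$ is extremal, and in fact $P^\psi\le P$ is irreducible.

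The third step adjoins the type $\mathrm{I}$ factors $M_v$, $v\in\mathcal{V}_0$, one at a time using Proposition~\ref{prop:extremal_tensor_products}(a). Enumerate $\mathcal{V}_0=\{v_1,\dots,v_m\}$ and set $Q_k=P\bar\otimes M_{v_1}\bar\otimes\cdots\bar\otimes M_{v_k}$ with state $\psi_k$. Each $\psi_k$ is almost periodic, $Q_k$ is a factor, and $\Sd(\psi_k)$ is the group generated by $\Gamma$ and $\Sd(\vphi_{v_1}),\dots,\Sd(\vphi_{v_k})$. For sufficiency, assume $\Sd(\vphi_v)\subset\Gamma$ for all $v\in\mathcal{V}_0$. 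Then $\Sd(\psi_k)=\Gamma$ for every $k$, and induction with (a) shows that each $\psi_k$ is extremal.

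Necessity is the main obstacle, because (a) applies only when the type $\mathrm{I}$ piece is adjoined to an already extremal state. The order in which the $M_v$ are adjoined matters here. I would instead group all the vertex algebras together: $R:=\overline{\bigotimes}_{v\in\mathcal{V}_0}M_v$ is itself a type $\mathrm{I}$ factor with almost periodic state $\rho$. Its point spectrum $\Sd(\rho)$ is the set of ratios of eigenvalues of the product density. Since each $\Sd(\vphi_v)$ contains all ratios of eigenvalues of the density of $\vphi_v$, $\Sd(\rho)$ contains $\Sd(\vphi_v)$ for every $v\in\mathcal{V}_0$. Applying (a) once to $P\bar\otimes R$, with $P$ extremal by the second step, gives that $\vphi$ is extremal if and only if $\Sd(\rho)\subset\Sd(\psi)=\Gamma$. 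If $\vphi$ is extremal, this forces $\Sd(\vphi_v)\subset\Sd(\rho)\subset\Gamma$ for each $v\in\mathcal{V}_0$. Conversely, if each $\Sd(\vphi_v)\subset\Gamma$, then every ratio in $\Sd(\rho)$ is a product of elements of the $\Sd(\vphi_v)$ and so lies in the group $\Gamma$. This single application of (a) handles both directions and avoids the ordering issue.
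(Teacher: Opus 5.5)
Your proposal is correct and is essentially the paper's proof. Write $M=M_0\bar\otimes M_1$ with $M_0=\overline{\bigotimes}_{v\in\mathcal{V}_0}M_v$ a type $\mathrm{I}$ factor, show $\vphi|_{M_1}$ is extremal with $\Sd(\vphi|_{M_1})=\Gamma$, and apply Proposition~\ref{prop:extremal_tensor_products}(a) once, exactly as in your final paragraph. The only variation is that the paper gets extremality of $\vphi|_{M_1}$ from extremality of each $\vphi|_{P_j}$ plus Proposition~\ref{prop:extremal_tensor_products}(b), whereas you use irreducibility of each $P_j^{\psi_j}\leq P_j$ plus the tensor commutation theorem; both work.

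There are two minor slips, neither affecting the final argument.

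First, the one-vertex-at-a-time induction is superfluous, and its claim that $\Sd(\psi_k)$ is always a group is false. $\Sd(\psi_k)$ is the product set $\Gamma\cdot\Sd(\vphi_{v_1})\cdots\Sd(\vphi_{v_k})$, which equals $\Gamma$ under your sufficiency hypothesis but need not be a group otherwise.

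Second, your reduced-word description of $\Sd(\psi)$ does not by itself show that products of several elements of a single $\Sd(\vphi_w)$ occur. For that you need a separating letter from a non-adjacent vertex. So $\Sd(\psi)\supseteq\Gamma$ is most easily obtained, as the paper does, from the extremality of $\psi$, which forces $\Sd(\psi)$ to be a group.
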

\begin{proof}
We first remark that each $\vphi_v$ is almost periodic since $M_v$ is atomic, and hence $\vphi$ is almost periodic. Denote
    \begin{align*}
        M_0:= \bigvee_{v\in \mathcal{V}_0} M_v \qquad \text{ and }\qquad  M_1:= \bigvee_{w\in \mathcal{V}\setminus \mathcal{V}_0} M_w ,
    \end{align*}
so that $(M, \vphi)= (M_0,\vphi|_{M_0})\bar\otimes (M_1,\vphi|_{M_1})$. The factoriality of $M$ implies each of $M_0$ and $M_1$ are factors, and the former is necessarily type $\mathrm{I}$ since it is simply the tensor product of the $M_v$ with $v\in \mathcal{V}_0$. Also, $\mathcal{G}$ being incomplete implies $\mathcal{V}\setminus \mathcal{V}_0$ is non-empty, and therefore $M_1$ is non-trivial. Using the notation from the proof of Theorem~\ref{thm:irreducible_graph_products}, $M_1$ is the tensor product of the $P_j$ corresponding to join-irreducible subgraphs $\mathcal{G}_j$ with at least two vertices. Moreover, for each $P_j\leq M_1$ the restriction $\vphi|_{P_j}$ is extremal: when $\mathcal{G}_j$ has two vertices this follows from \cite[Theorem 2.1]{Ued11b} and the fact that $M$ (and hence $P_j$) is a factor, and when $\mathcal{G}_j$ has at least three vertices this follows from Proposition~\ref{prop:join-irreducible_extremal}. Consequently, $\vphi|_{M_1}$ is a tensor product of extremal states and is therefore extremal by part (b) of Proposition~\ref{prop:extremal_tensor_products}. But then part (a) of this same proposition implies  $\vphi = \vphi|_{M_0}\otimes \vphi|_{M_1}$ is extremal if and only if $\Sd(\vphi|_{M_0}) \subset \Sd(\vphi|_{M_1})$. 

Now, $\sigma_t^\vphi|_{M_v} = \sigma_t^{\vphi_v}$ for all $v\in \mathcal{V}$, implies
    \[
        \Sd(\vphi|_{M_0}) = \prod_{v\in \mathcal{V}_0} \Sd(\vphi_v),
    \]
and that $\Sd(\vphi|_{M_1})$, which is necessarily a group by the extremality of $\vphi|_{M_1}$ (see \cite[Remark 2.2]{GGLN25}), equals $\Gamma$. Thus the inclusion $\Sd(\vphi|_{M_0})\subset \Sd(\vphi|_{M_1})$ is equivalent to $\Sd(\vphi_v) \subset \Gamma$ for all $v\in \mathcal{V}_0$.
\end{proof}

\begin{thm}\label{thm:extremal_almost_periodic_graph_products}
Let $\mathcal{G}=(\mathcal{V},\mathcal{E})$ be an incomplete, finite, simple graph, and let
    \[
        (M,\vphi)= \gp_{v\in \mathcal{V}} (M_v,\vphi_v)
    \]
be a graph product over $\mathcal{G}$ of von Neumann algebras equipped with almost periodic states. Suppose that $M$ is a factor and that $M_v^{\vphi_v}\cap \ker{\vphi_v}$ contains a unitary for each $v\in \mathcal{V}$. Denote by $\mathcal{V}_0$ the set of vertices $v\in \mathcal{V}$ with $B(v)=\mathcal{V}$, let $\Gamma\leq \R_+$ be the group generated by $\Sd(\vphi_w)$ for $w\in \mathcal{V}\setminus \mathcal{V}_0$, and let $M_0:= \bigvee_{v\in \mathcal{V}_0} M_v$. Then $\vphi$ is extremal if and only if
    \[
        \left[\bigcup_{\lambda\in L} M_0^{(\vphi,\lambda)}\right]'\cap M_0^\vphi =\C,
    \]
where $L= \left( \prod_{v\in \mathcal{V}_0} \Sd(\vphi_v)\right) \cap \Gamma$.
\end{thm}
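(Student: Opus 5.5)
The plan is to follow the proof of Theorem~\ref{thm:extremal_atomic_graph_products}, replacing part (a) of Proposition~\ref{prop:extremal_tensor_products} by part (b) with the roles of the tensor factors swapped. As before, set $M_1:=\bigvee_{w\in\mathcal{V}\setminus\mathcal{V}_0}M_w$, so that
\[
(M,\vphi)\cong (M_1,\vphi|_{M_1})\bar\otimes(M_0,\vphi|_{M_0}).
\]
Factoriality of $M$ forces $M_0$ and $M_1$ to be factors. Incompleteness of $\mathcal{G}$ makes $M_1$ non-trivial, and it is the tensor product of the $P_j$ attached to join-irreducible subgraphs $\mathcal{G}_j$ with at least two vertices. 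Each $\vphi_v$ is almost periodic, so $\vphi|_{M_1}$ and $\vphi|_{M_0}$ are almost periodic as well: graph products of almost periodic states are almost periodic, since their modular groups restrict to the vertex modular groups and the eigenoperators of the vertex algebras generate.

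The first step is to show that $\vphi|_{M_1}$ is extremal. For each $j$, the state $\vphi|_{P_j}$ is extremal. When $|\mathcal{V}_j|=2$, this follows from \cite[Theorem 2.1]{Ued11b}: $P_j$ is a factor free product of algebras with almost periodic states, so the dimension condition holds by \cite[Theorem 1.1]{Dyk93}, and Remark~\ref{rem:diffuse_free_product_centralizer} gives $z=1$ because the centralizers contain state-zero unitaries. When $|\mathcal{V}_j|\ge 3$, it follows from Proposition~\ref{prop:join-irreducible_extremal}. Iterating part (b) of Proposition~\ref{prop:extremal_tensor_products} then shows $\vphi|_{M_1}$ is extremal. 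At each iteration the first factor is extremal, and the displayed commutant condition holds because the second factor's state is itself extremal: any $x\in(M_2^{\vphi_2})'\cap M_2^{\vphi_2}$ is already a scalar, so the relevant commutant is contained in $\C$. This is the same argument as in Theorem~\ref{thm:extremal_atomic_graph_products}.

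The second step applies part (b) of Proposition~\ref{prop:extremal_tensor_products} with first factor $(M_1,\vphi|_{M_1})$ and second factor $(M_0,\vphi|_{M_0})$. It shows that $\vphi$ is extremal if and only if
\[
\Bigl[\bigcup_{\lambda\in\Sd(\vphi|_{M_1})\cap\Sd(\vphi|_{M_0})}M_0^{(\vphi,\lambda)}\Bigr]'\cap M_0^{\vphi}=\C .
\]
It then remains to identify the index set with $L$.

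First, $\Sd(\vphi|_{M_1})=\Gamma$. Extremality of $\vphi|_{M_1}$ makes $\Sd(\vphi|_{M_1})$ a group by \cite[Remark 2.2]{GGLN25}. It contains each $\Sd(\vphi_w)$ for $w\in\mathcal{V}\setminus\mathcal{V}_0$, and it is contained in the group these generate, because almost periodicity means $M_1$ is spanned weakly by products of vertex eigenoperators. Second, $\Sd(\vphi|_{M_0})=\prod_{v\in\mathcal{V}_0}\Sd(\vphi_v)$. Indeed, $M_0$ is the tensor product of the $M_v$, $v\in\mathcal{V}_0$, with the product state, so $\Delta$ is the tensor product of the diagonalizable $\Delta_{\vphi_v}$, and its point spectrum is the product set. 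The intersection of the two sets is therefore exactly $L$.

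I expect the main obstacle to be confirming that Proposition~\ref{prop:extremal_tensor_products}(b) applies as stated. It requires $M_1$ and $M_0$ to be factors and both states almost periodic, and it is formulated with the extremal factor listed first, so I must be careful about this ordering. The proposition's proof only needs $M_0$ to be a factor in order to get $((M_1^{\vphi})'\cap M_1)\bar\otimes((M_0^{\vphi})'\cap M_0)$, and both factoriality and almost periodicity are available here. The identification $\Sd(\vphi|_{M_1})=\Gamma$ uses only the easy containments above.
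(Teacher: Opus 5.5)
Your proposal is correct and follows essentially the same route as the paper. The paper writes $(M,\vphi)=(M_0,\vphi|_{M_0})\bar\otimes(M_1,\vphi|_{M_1})$, gets extremality of $\vphi|_{M_1}$ as in the proof of Theorem~\ref{thm:extremal_atomic_graph_products}, identifies $\Sd(\vphi|_{M_0})=\prod_{v\in\mathcal{V}_0}\Sd(\vphi_v)$ and $\Sd(\vphi|_{M_1})=\Gamma$, and applies Proposition~\ref{prop:extremal_tensor_products}(b) with the extremal factor $M_1$ in the first slot. You do the same, only with more detail for the spectral identifications and for the extremality of the two-vertex pieces.
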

\begin{proof}
Define $M_1:=\bigvee_{w\in \mathcal{V}\setminus \mathcal{V}_0} M_w$, so that arguing as in the proof of Theorem~\ref{thm:extremal_atomic_graph_products} we have: $\vphi = \vphi|_{M_0}\otimes \vphi|_{M_1}$, $\vphi|_{M_1}$ is extremal, and
    \[
        Sd(\vphi|_{M_0}) = \prod_{v\in \mathcal{V}_0} \Sd(\vphi_v) \qquad \text{ and }\qquad \Sd(\vphi|_{M_1}) = \Gamma.
    \]
Thus $L= \Sd(\vphi|_{M_0}) \cap \Sd(\vphi|_{M_1})$ and therefore the theorem follows from Proposition~\ref{prop:extremal_tensor_products}.
\end{proof}

\begin{rem}\label{rem:almost_periodic_graph_product_type}
For $M$ as in either Theorem~\ref{thm:extremal_atomic_graph_products} or Theorem~\ref{thm:extremal_almost_periodic_graph_products}, its type can be computed as follows. We have
    \[
        M = M_0 \bar\otimes M_1 = \left(\underset{v\in \mathcal{V}_0}{\overline{\bigotimes}} M_v\right) \bar\otimes M_1,
    \]
and $M_1$ is type $\mathrm{II}_1$ or type $\mathrm{III}_\lambda$ with $\lambda \neq 0$ by the procedure outlined in Remark~\ref{rem:graph_product_type}. Then the type of $M$ is determined by those of the $M_v$ for $v\in \mathcal{V}_0$. However, unlike in Theorem~\ref{thm:extremal_graph_products} where we effectively assumed $\vphi_v$ was extremal for each $v\in \mathcal{V}_0$, $M_v$ can be semifinite without $\vphi_v$ being tracial. Consequently, $M$ has more available types than in Remark~\ref{rem:graph_product_type}, though the extremality of $\vphi$ precludes if from being type $\mathrm{III}_0$. Explicitly,  $M$ can be type $\mathrm{II}_1$, $\mathrm{II}_\infty$, or type $\mathrm{III}_\lambda$ with $\lambda\neq 0$. $\hfill\blacksquare$
\end{rem}

\bibliographystyle{amsalpha}
\bibliography{references}

\end{document}